%------------------------------------------------------------------------------
% Beginning of journal.tex
%------------------------------------------------------------------------------
%
% AMS-LaTeX version 2 sample file for journals, based on amsart.cls.
%
%        ***     DO NOT USE THIS FILE AS A STARTER.      ***
%        ***  USE THE JOURNAL-SPECIFIC *.TEMPLATE FILE.  ***
%
% Replace amsart by the documentclass for the target journal, e.g., tran-l.
%
\documentclass{amsart}

\usepackage{amssymb}
\usepackage{color}
\usepackage{amsxtra} 
\usepackage{mathrsfs} 
\usepackage{yfonts}

\newtheorem{theorem}{Theorem}%[section]
\newtheorem{lem}[theorem]{Lemma}
\newtheorem{prop}[theorem]{Proposition}
 
\newtheorem*{theorem*}{Main Theorem} 
\newtheorem*{theorem**}{Theorem}

\theoremstyle{definition}
\newtheorem{definition}[theorem]{Definition}

\theoremstyle{remark}
\newtheorem{remark}[theorem]{Remark} 
\newtheorem*{remark*}{Remark}

\numberwithin{equation}{section}
\numberwithin{theorem}{section} 

%    Absolute value notation

%    Blank box placeholder for figures (to avoid requiring any
%    particular graphics capabilities for printing this document).

\begin{document}

\title[A
n instability mechanism of  pulsatile  flow 
%geometric instability of the Euler flows  
]
{
An instability mechanism of pulsatile flow
%on
 along
particle trajectories
% for
 for the axisymmetric Euler equations
%induced by swirl component
%A geometric instability of 
% the laminar axisymmetric Euler flows
%and its application to 
% with rapidly increasing flux
}

%    Information for first author
%\author{Gerard Misio{\l}ek}
%    Address of record for the research reported here
%\address{Department of Mathematics, University of Colorado, Boulder, CO 80309-0395, USA 
%and 
%Department of Mathematics, University of Notre Dame, IN 46556, USA} 
%    Current address
%\curraddr{Department of Mathematics, University of Colorado, Boulder, CO 80309-0395, USA} 
%\email{gmisiole@nd.edu} 
%    \thanks will become a 1st page footnote.
%\thanks{The first author was supported in part by NSF Grant \#000000.}

%    Information for second author
\author{Tsuyoshi Yoneda}
\address{Graduate School of Mathematical Sciences, University of Tokyo, Komaba 3-8-1 Meguro, Tokyo 153-8914, Japan} 
\email{yoneda@ms.u-tokyo.ac.jp}
%\thanks{The second author was supported in part by NSF Grant \#000000.}

%    General info
\subjclass[2000]{Primary 35Q35; Secondary 35B30}

\date{\today} 

%\dedicatory{This paper is dedicated to our advisors.}

\keywords{Euler equations, Frenet-Serret formulas, orthonormal moving frame} 

\begin{abstract} 
The dynamics along the particle trajectories for the 3D axisymmetric Euler equations in an infinite cylinder 
%with rapidly increasing flux
 are considered. 
It is shown that 
if the inflow-outflow is rapidly increasing in time, the corresponding  
 laminar profile of the Euler flow is not (in some sense) stable
%with smooth initial data loses its smoothness instantaneously 
provided that 
the swirling component  is not small. 
This exhibits an instability mechanism of pulsatile flow. 
In the proof, Frenet-Serret formulas and orthonormal moving frame are essentially used.
\end{abstract} 

\maketitle

%%%%%%%%%%%%%%%%%%%%%%%%%%%%%%%%
\section{Introduction} 
\label{sec:Intro} 
We study the dynamics along  the particle trajectories for the 3D axisymmetric Euler equations.
Such  Lagrangian dynamics have already been studied in mathematics (see \cite{C0, C1,C2}). For example, in \cite{C1}, Chae considered a blow-up problem for the axisymmetric 3D incompressible Euler equations with swirl. 
More precisely, he showed that under some assumption of local minima for the pressure on the axis of symmetry
with respect to the radial variations along some particle trajectory, the solution blows up in finite time.

%In this paper 
%we give a partial answer of the outstanding blowup problem on the 3D Euler equations.
%More precisely,
% we construct 
%an inviscid flow with
% smooth initial data 
%to the axisymmetric Euler equations
% in a pipe with  smooth inflow-outflow condition such that 
%the corresponding Euler flow loses its smoothness instantaneously.

% This mathematical result is related to a problem in physics,
Although the blowup problem of 3D Euler equations is still an outstanding open problem, in this paper, we focus on a different problem in physics, 
especially,  the cardiovascular system \cite{FQV}. 
If the blood flow is in large and medium sized vessels, the flow is governed by the usual incompressible Navier-Stokes equations. 
In this study field, Womersley number is the key. The Wormersley number
comes from oscillating (in time) solutions to the  Navier-Stokes equations in a tube.
Let us explain more precisely.
We define a pipe $\Omega_{\mathcal R}$ as  $\Omega_{\mathcal R}:=\{x\in\mathbb{R}^3: \sqrt{x_1^2+x_2^2}<\mathcal R,\ 0<x_3<\ell
%2(1-\beta)^{-1}
\}$
with its side-boundary  $\partial\Omega_{\mathcal R}=\{x\in\mathbb{R}^3: \sqrt{x_1^2+x_2^2}=\mathcal R,\ 0<x_3<\ell
%2(1-\beta)^{-1}
\}$.
The incompressible Navier-Stokes equations are described as follows:
\begin{equation}
\label{NS eq.}
\partial_tu+(u\cdot \nabla)u-\nu\Delta u=-\nabla p,\quad\nabla \cdot u=0\quad \text{in}\quad \Omega,
%\\
%\nonumber
%& & \quad u|_{t=0}=u_0,
\quad u=0 \quad \text{on}\quad \partial\Omega_{\mathcal R}
%\quad
%u(x,t)\to (0,0,g(t))\quad (x_3\to\pm\infty)
\end{equation}
with $u=u(x,t)=(u_1(x_1,x_2,x_3,t),u_2(x_1,x_2,x_3,t),u_3(x_1,x_2,x_3,t))$ and 
 $p=p(x,t)$.
% and an uniform (in space) inflow-outflow condition  $g=g(t)$ (the uniform setting is just for simplicity,
%$g(t)=$

To give the Womersley number, we need to focus on the axisymmetric Navier-Stokes flow without swirl (see \cite{W}).
If $p_1$ and $p_2$ are the pressure at the ends of the pipe $\Omega_{\mathcal R}$,
%: $x_3=0$ and $x_3=\ell$ respectively, 
the 
pressure gradient can be expressed as $(p_1-p_2)/\ell$.
If the pressure gradient is time-independent, $(p_1-p_2)/\ell=:p_s$, 
then we can find the stationary Navier-Stokes flow (Poiseuille flow):
\begin{equation}\label{p_s}
u_s=(u_1,u_2, u_3)=(0, 0, \frac{p_s}{4\nu \ell}(\mathcal R^2-r^2)),
\end{equation}
where $r=\sqrt{x_1^2+x_2^2}$.
Note that $u_s$ is also a solution to the  linearized Navier-Stokes equations.
%To give the Womersley number, 
Next we  consider the oscillating pressure gradient case,
\begin{equation}\label{n}
\frac{p_1(t)-p_2(t)}{\ell}=p_oe^{i N t}
\end{equation}
which is periodic in the time.
% with a frequency
%\begin{equation*}
%f=n/2\pi.
%\end{equation*}
Then its corresponding solution $u_o$  can be written explicitly by using a Bessel function (see \cite[(8)]{W} and \cite[(1)]{TKWP}) with $u_1=u_2=0$. Thus $u_o$ is also a solution to the linearized Navier-Stokes equations.
Now we can give the Womersley number $\alpha$  as follows: 
\begin{equation*}
\alpha=\mathcal R\sqrt{\frac{N}{\nu}}.
\end{equation*}
In \cite{TKWP}, they also defined the oscillatory Reynolds number and 
% $2\mathcal R/\nu$,
the mean Reynolds number by using $u_o$ and $u_s$ respectively,
%: $2(p_1-p_2)\mathcal R/\nu$.
and they investigated how the transition of pulsatile flow from the laminar to the 
turbulent (critical Reynolds number) is affected by the Womersley number and the oscillatory Reynolds number.
According to their experiment,
measurement at different Womersley numbers yield similar transition behavior,
and variation of the oscillatory Reynolds number also appear to have little effect.
Thus they conclude that the transition seems to be determined only by the mean Reynolds number.
%If the number is small, then the flow can be approximated by Poiseuille's flow. If the number 
%is large, then it expresses a sort of ``turbulent transition" such as Reynorlds number (see \cite{TKWP}).
However it seems they did not investigate  the effect of  the non-small swirl component (azimuthal component),
 and thus
%This turbulent transition mechanism may be caused by wall turbulence,
% however, it seems none of mathematicians have tried to clarify this instability mechanism so far.
%Since we consider the dynamics of the inner flow (apart from the boundary),
%The aim of this mathematical paper
our aim here is to 
%analyze this  
%On the other hand, our 
% is   to find an
 %instability mechanism,
% in a pipe.
%and
 show that the non-small swirl component induces an instability which is, at a glance, nothing to do with wall turbulence.
% (c.f. \cite{GMN})
%, which 
% is different from the wall turbulence. 
%(for the recent study of turbulent transition in physics, see \cite{ST}).
%Since we  focus on behavior of the interior flow in a pipe,
Let us explain more precisely.
Since we  would not like to  take the boundary layer into account,  it is reasonable to consider  
a simpler model: the 3D axisymmetric Euler flow in an infinite cylinder
$\Omega:=\{x\in\mathbb{R}^3: \sqrt{x_1^2+x_2^2}<1,\ x_3\in\mathbb{R}
%2(1-\beta)^{-1}
\}$ (the setting $\Omega$ is just for simplicity). 
%(we explain why using the Navier-Stokes equations in this problem is difficult in Remark \ref{NS difficult}).
%The configuration of the boundary is not important anymore, thus 
%The setting $\Omega$ is just for simplicity.
The incompressible Euler equations are expressed as follows:
%Note that 
%the boundary configuration is not important anymore, this pipe setting is just for the simplicity.
%in the cardiovascular system, the boundary itself may depend on the time $t$. The incompressible Euler equations are expresses as follows: 
%with
%rapidly increasing inflow-outflow:
%$\partial\Omega_{side}:=\{(\cos\theta,\sin\theta,z):  \theta\in[0,2\pi),\ 0<z<\infty
%2(1-\beta)^{-1}
%\}$ 
%and
%$\partial\Omega_{lid}:=\{(r\cos\theta,r\sin\theta,0):  0\leq r<1,\theta\in[0,2\pi)
%2(1-\beta)^{-1}
%\}$  
%The domain $\Omega$ is fixed in time (\cite[Section 2.1]{FQV}).
%We consider the  
%axi-symmetric 3D Euler equation in a cylinder.
% and classify the possible boundary blowup scenario.
%In this case we do not need to use any singular integral theory.
%By using the Lagrangian deformation observation (only seeing the local behavior) on the boundary, then we can classify the possible boundary blowup scenario.
%We sometimes use the following usual expression of the Euler equations and the usual Lagrangian flow:
%The Euler equations in the cylinder 
\begin{eqnarray}
\label{Euler eq.}
& &\partial_tu+(u\cdot \nabla)u=-\nabla p,\quad\nabla \cdot u=0\quad \text{in}\quad \Omega,\\
\nonumber
& & \quad u|_{t=0}=u_0,\quad u\cdot n=0 \quad \text{on}\quad \partial\Omega,\quad
u(x,t)\to (0,0,g(t))\quad (x_3\to\pm\infty)
\end{eqnarray}
with $u=u(x,t)=(u_1(x_1,x_2,x_3,t),u_2(x_1,x_2,x_3,t),u_3(x_1,x_2,x_3,t))$, 
 $p=p(x,t)$ and an uniform (in space) inflow-outflow condition  $g=g(t)$ (the uniform setting is just for simplicity,
we can easily generalize it), where $n$ is a unit normal vector on the boundary.

\begin{remark}
According to the boundary layer theory,
% when flows of fluids with small viscosity 
%past an obstacle, the frictional aspects of the flow are confined to a thin boundary layer.
outside the boundary layer the fluid motion is accurately described by the Euler flow.
%In the whole space $\mathbb{R}^3$ case (namely, no boundary layer case),
% behavior of the Navier-Stokes flow and the Euler flow is very close provided by 
%sufficiently small viscosity  $\nu>0$ (see \cite[Proof of Theorem 1.3]{KLT}).
Thus the above  simplification seems (more or less) valid.
For the recent progress on the mathematical analysis of the boundary layer, 
see \cite{MM}.
\end{remark}
Roughly saying, the inflow-outflow $g$ is a simplification of  $u_s+u_o$,
namely,  $u_s$ is approximated by the mean-value of $g$,
$\partial_t u_o$ and $\partial_t^2 u_o$ are  approximated by $g'$ and $g''$ respectively.
%in \eqref{p_s}, $n$ (which is different from the unit normal vector)
% in \eqref{n}.
Since we consider the axisymmetric Euler flow, we can simplify the Euler equations \eqref{Euler eq.}.
Let $e_r:= x_h/|x_h|$,
%(\cos\theta,\sin\theta,0)$, 
$e_\theta:=x_h^\perp/|x_h|$ and
%(-\sin\theta,\cos\theta,0)$ 
  $e_z=(0,0,1)$ with
%represents velocity of the fluid,
 $x_h=(x_1,x_2,0)$, $x_h^\perp=(-x_2,x_1,0)$.
The vector valued function $u$ can be rewritten as   $u=v_re_r+v_\theta e_\theta+v_ze_z$,  
where $v_r=v_r(r,z,t)$, $v_\theta=v_\theta(r,z,t)$ and $v_z=v_z(r,z,t)$
with $r=|x_h|$ and $z=x_3$.
% and $\theta=\arctan (x_2/x_1)$.
% $\Delta=\partial_r^2+(1/r)\partial_r+\partial_z^2$.
% $x\in \Omega$.
Then the   axisymmetric Euler equations can be expressed  
%in a cylinder
% ($\beta$ is specified later)
% can be expressed 
as follows: 
\begin{eqnarray}
%\nonumber
\partial_t v_r+v_r\partial_rv_r+v_z\partial_zv_r-\frac{v_\theta^2}{r}+\partial_r p&=&
0,\\
\label{axisymmetricEuler-1}
\partial_tv_\theta+v_r\partial_rv_\theta+v_z\partial_zv_\theta+\frac{v_rv_\theta}{r}&=&
0,\\
%\nonumber
\partial_tv_z+v_r\partial_rv_z+v_z\partial_zv_z+\partial_zp&=&0
%\Delta V_z
,\\
%\nonumber
\label{axisymmetricEuler-2}
\frac{\partial_r(rv_r)}{r}+\partial_zv_z&=&0.
%\quad\text{in}\quad \Omega,
\end{eqnarray}
%$v_r=0$,
% on $\partial \Omega$,
%Here we impose ``rapidly increasing inflow on  the boundary" as follows (here $n=(0,0,1)$):
%\begin{equation*}
%v_z(r_0,0,t)=(1-t)^{-\beta},\quad 0<\beta<1.
%\end{equation*}

In order to show that  the non-small swirl component induces the instability,
we need to measure appropriately the rate of laminar profile of the Euler flow.  
% the key definition of  ``laminar flow profile". To do so, we need 
%several definitions.
%\begin{definition}
%We call ``unilateral flow" iff $v_z=u\cdot e_z>0$ in $\Omega$.
%\end{definition}
%\begin{definition}(Streamline.)\ 
%Assume $v_z>0$ in $\bar \Omega$. 
%For fixed $t>0$, let $\gamma$ be such that 
%\begin{equation*}
%\partial_s\gamma(x,s,t)=u(\gamma(x,s,t),t),\quad
%\gamma(x,0,t)=x\in\Omega.
%\end{equation*}
%\end{definition}
%In this case, 
%\begin{equation*}
%\cup_{y>0}\gamma(x,y,t)\bigcap \cup_{y>0}\gamma(x',y,t)=\emptyset
%\end{equation*}
%for $x,x'\in \partial\Omega_{lid}$ with $x\not=x'$.
\begin{definition}\label{Stream-shell near the boundary} 
(Axis-length  streamline in $z$.)\ 
%From  the streamline $\gamma(x,y,t)$,
For a unilateral flow $v_z=u\cdot e_z>0$ in $\Omega$,
 we can define an axis-length streamline $\gamma(z)$.
Let $t$
% and $\bar r_0$
 be fixed, and let $\gamma(z)$ be such that 
%For $\bar R, \bar \Theta\in C^\infty(\mathbb{R})$, define
\begin{equation*}
\gamma(\bar r_0,z,t)=\gamma (z):=(\bar R(z)\cos \bar \Theta(z), \bar  R(z)\sin \bar \Theta(z), z)
%\quad\text{with}\quad
%t(0)=0,\quad \theta(0)=0.
\end{equation*} 
with
$\bar R(z)=\bar R(\bar r_0,z,t)$, $\bar R(\bar r_0,-\infty,t)=\bar r_0$, $\bar \Theta(z)=\bar \Theta(z,t)$
%, $\bar \Theta(0,t)=0$
 and we choose $\bar R$ and $\bar \Theta$ in order to satisfy 
\begin{equation*}
%\tilde\gamma(x,z)=
\partial_z\gamma(z)=\left(\frac{u}{u\cdot e_z}\right)(\gamma(z),t).
%\gamma(x,s(z))\quad \text{and}\quad \tilde\gamma(x,0,t)=\gamma(x,0,t)
\end{equation*}
%with its re-parameterize factor $s=s(z)$.
\end{definition}
We easily see 
\begin{equation*}
\partial_z\gamma\cdot e_z=1,\quad \partial_z\gamma\cdot e_r=\partial_z\bar R=\frac{v_r}{v_z}\quad\text{and}\quad
\partial_z \gamma\cdot e_\theta=\bar R\partial_z\bar\Theta=\frac{v_\theta}{v_z}.
\end{equation*}
In this paper we always assume existence of  a unique smooth solution to the Euler equations. 
Since $\partial_{\bar r_0}\bar R>0$ due to the smoothness,
%(otherwise uniqueness does not hold), 
we have 
its inverse $r_0=\bar R^{-1}(r,z,t)$.
%In order to define  ``rate of laminar profile", we use 
%$\bar R$ and $\bar R^{-1}$.
%In the definition of ``stable laminar flow" (we will explain it later) we use 
%the regularity of $\bar R$ and $\bar R^{-1}$ up to three derivatives.
%Note that 
%the oscillation $g$ directly propagates to $v_r$ and $v_z$ (see Proposition \ref{formula of velocities}).
%Thus $v_r$ and $v_z$  may not be  suitable for 
%the definition of  ``laminar profile" should come from  geometry, thus it seems $\bar R$ and $\bar R^{-1}$ are the suitable concepts rather than the velocity.
%$v_r$ and $v_z$.
% and thus not directly  depending on the time oscillation $g$.
%\noindent
%\textbf{Assumption.}
We now give the key definition.
\begin{definition} (Rate of laminar profile.)\ 
Let $\partial=\partial_{z}$ or $\partial_{\bar r_0}$, and let $\bar \partial=\partial_{z}$ or $\partial_r$. We 
define ``rate of laminar profile" $L^x$ and $L^t$ as follows:
%call `` uniformly smooth laminar profile" if and only if 
% $\bar R$ and $\bar R^{-1}$ satisfy the following
\begin{equation*}
%C^{-1}\leq
L^x(\bar r_0,z,t)
%&
:=
%&
%\left|\frac{1}{\partial_{r_0}\bar R}\right|
%\\
%|\partial_{\bar r_0}\bar R-1|
%+|\partial_r \bar R^{-1}-1|\\
%& &
%\leq C,
\sum_{\ell=1}^3
|\partial^\ell\bar R(\bar r_0,z,t)|
%\leq C, 
+
 \sum_{\ell=1}^3|(\bar \partial^\ell\bar R^{-1})(\bar R(\bar r_0,z,t),z,t)|
%\\
%& &
%+ 
%\leq C,  |\partial_t\bar R^{-1}|,
%\leq C,
%|\partial_t\bar R^{-1}|
%+
%|\partial_t\partial_{\bar r_0}\bar R|.
%\leq 1/\epsilon
\end{equation*}
and
\begin{equation*}
L^t(\bar r_0,z,t)=
|(\partial_t\bar R^{-1})(\bar R(\bar r_0,z,t),z,t)|
+|(\partial_t\partial_{\bar r_0}\bar R)(\bar r_0,z,t)|+|(\partial_t\partial_z\bar R)(\bar r_0,z,t)|.
\end{equation*}
Later we deal with the curvature and torsion of the particle trajectory, thus it is natural to see up to three derivatives.
\begin{remark}
As we already assumed that  solutions to the Euler equations are always unique and smooth enough,
thus, when the transition of  the Euler flow from the laminar to the turbulent regime
occurs, then $L^x$ and/or $L^t$ must tend to infinity.
\end{remark}

%for $t\in[0,1)$,
% $j=1,2,3$,
% $\ell=1,2,3$.
%We choose sufficiently small $\epsilon>0$ later.
\begin{remark}
Minumum value of $L^x$ is $2$, since $|\partial_r\bar R^{-1}|=1/|\partial_{\bar r_0}\bar R|$.
\end{remark}

\end{definition}

\begin{remark}\label{no swirl}
We easily see that $u=(0,0,g)$
% in $\Omega\times[0,1)$
 is one of the solution to \eqref{Euler eq.}.
This flow is the typical laminar flow.
In this case 
\begin{equation*}
L^x\equiv 2
\quad\text{and}\quad
L^t\equiv 0.
%|\partial_t\bar R^{-1}|+|\partial_t\partial_{\bar r_0}\bar R|+|\partial_t\partial_z\bar R|=0.
\end{equation*}
%\begin{equation*}
% \partial_{\bar r_0}\bar R=\partial_{r}\bar R^{-1}= 1,\quad
%\partial^\ell\partial_z\bar R= 
% \bar \partial^\ell\partial_z\bar R^{-1}
%= 
%\partial_t\bar R^{-1}=
%\partial_t\partial_{\bar r_0}\bar R=0
%\end{equation*}
%for $t\in[0,1)$,
% $\ell=0,1,2$.
\end{remark}
Now we give the main theorem.

\begin{theorem}

Assume there is a unique  smooth solution to the Euler equations \eqref{Euler eq.}
with smooth initial data satisfying $|L^x|\leq \beta$ for some positive constant $\beta$ (we will determine $\beta$ later).
% in $\Omega\times [0,1)$.
For any $x\in \Omega$ satisfying $u_0(x)\cdot e_\theta\approx 1$
and $x\cdot e_r>1/\beta$, 
%any small time interval $I\subset[0,1)$
 and any  $\epsilon>0$, then there is  $\delta>0$
such that 
\begin{equation}\label{only occur}
L^t(\bar r_0,z,0)
%|(\partial_t \bar R^{-1})(r,z,0)|+
%>1
%/\epsilon
%\quad\text{or}\quad
%|(\partial_t\partial_{\bar r_0}\bar R)(\bar r_0,z,0)|+
%>1/\epsilon
%\quad\text{or}\quad
%|(\partial_t\partial_z\bar R)(\bar r_0,z,0)|
\gtrsim 1/\epsilon
\end{equation}
with any smooth inflow-outflow $g(t)$ satisfying
\begin{equation}\label{flux condition}
1/\beta^5\leq g(0)\leq 1/\epsilon^5\quad\text{and}\quad 
\frac{1}{\delta^3}<\frac{g'(0)}{\delta^2}<g''(0),
%\quad\text{in}\quad
%t\in I,
\end{equation}
where $(\bar r_0, z)=\gamma^{-1}(x,0)$.
% and $r=\bar R(\bar r_0,z,0)$.
%Note that  if $\epsilon$ tends to zero, then $\delta$ also tends to zero.
\end{theorem}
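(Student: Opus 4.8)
The plan is to compute the Frenet--Serret apparatus of the instantaneous streamline $\gamma(\cdot,t)$ and to read off both $L^x$ and the dynamical quantities from it. First I would record, at the point $\gamma(z)=\bar R e_r+z e_z$, the frame identities $\partial_z e_r=(\partial_z\bar\Theta)e_\theta$ and $\partial_z e_\theta=-(\partial_z\bar\Theta)e_r$, so that
\[
\gamma'=\bar R' e_r+\bar R\bar\Theta' e_\theta+e_z,\qquad \bar R'=\frac{v_r}{v_z},\quad \bar R\bar\Theta'=\frac{v_\theta}{v_z}.
\]
Differentiating once more produces $\gamma''$ with component $(\bar R''-\bar R(\bar\Theta')^2)$ along $e_r$ and $(2\bar R'\bar\Theta'+\bar R\bar\Theta'')$ along $e_\theta$, and a third differentiation produces $\gamma'''$; from these the curvature $\kappa=|\gamma'\times\gamma''|/|\gamma'|^3$ and the torsion $\tau=(\gamma'\times\gamma'')\cdot\gamma'''/|\gamma'\times\gamma''|^2$ are rational expressions in $\bar R,\bar\Theta$ and their $z$-derivatives up to order three. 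Since the hypothesis $|L^x|\le\beta$ bounds exactly these derivatives (together with those of $\bar R^{-1}$), the entire geometry of the streamline stays in a compact regime governed by $\beta$.

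Second, I would convert the desired bound on $L^t$ into a lower bound on the Eulerian time derivative of $v_r/v_z$ along the streamline. Using $\partial_z\bar R=(v_r/v_z)(\bar R,z,t)$ and implicit differentiation of $\bar R(\bar R^{-1},z,t)=r$ one gets $\partial_t\bar R^{-1}=-\partial_t\bar R/\partial_{\bar r_0}\bar R$ and $\partial_z\partial_t\bar R=\partial_t\big((v_r/v_z)(\bar R,z,t)\big)$, so that all three terms of $L^t$ are controlled once the material quantity $\partial_t(v_r/v_z)$ is estimated (the denominators $\partial_{\bar r_0}\bar R$ are bounded above and below by $L^x\le\beta$). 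Expanding, $\partial_t(v_r/v_z)=v_z^{-1}\partial_t v_r-v_r v_z^{-2}\partial_t v_z$, and I would substitute the Euler equations for $\partial_t v_r$ and $\partial_t v_z$. The pressure is eliminated in two steps: the compatibility $\partial_r\partial_z p=\partial_z\partial_r p$ together with the radial balance $\partial_r p=v_\theta^2/r+(\text{convective})$ ties the radial gradient to the axial one, while incompressibility $\partial_z v_z=-r^{-1}\partial_r(rv_r)$ and the conserved angular momentum $D_t(rv_\theta)=0$ (obtained by multiplying the $v_\theta$-equation by $r$) fix the remaining unknowns in terms of the swirl and of $v_z$.

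Third comes the quantitative heart. The far-field condition $u\to(0,0,g(t))$ forces $\partial_z p\to-g'(t)$, so $\partial_t v_z$ carries the factor $g'$, while a further time differentiation of the $v_z$-equation brings $g''$ into $\partial_t\partial_z p$ and hence into the axial gradient that drives $v_r$. Inserting $v_\theta(x,0)\approx1$, $r=x\cdot e_r>1/\beta$, and the scale separation imposed by $1/\beta^5\le g(0)\le1/\epsilon^5$ together with $g'(0)>1/\delta$ and $g''(0)>g'(0)/\delta^2>1/\delta^3$, the non-small centrifugal term $v_\theta^2/r\gtrsim\beta^{-1}$ can no longer be balanced by a pressure that is simultaneously required to produce the prescribed axial acceleration; the residual forces $\partial_t(v_r/v_z)$ to be at least of order $g'/g$, with a genuine contribution from $g''$ through the $z$-dependence of the pressure, which by the flux inequalities exceeds any prescribed $1/\epsilon$ once $\delta$ is taken small. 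This yields $L^t(\bar r_0,z,0)\gtrsim1/\epsilon$.

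I expect the main obstacle to be precisely this pressure elimination and the propagation of the far-field acceleration into a pointwise interior lower bound. The pressure is a nonlocal elliptic quantity, so relating $\partial_z p$ and $\partial_r p$ at the interior point $x$ to the prescribed data $g,g',g''$ demands exploiting the axisymmetric structure and the smallness of all competing spatial derivatives guaranteed by $L^x\le\beta$, rather than a direct computation. A secondary difficulty is the bookkeeping distinction between the instantaneous streamline, whose geometry is $L^x$-controlled, and the genuine time evolution measured by $L^t$; keeping the two consistent is what makes the swirl hypothesis $v_\theta\approx1$ and the away-from-axis hypothesis $r>1/\beta$ indispensable, since without them the centrifugal obstruction degenerates and no lower bound survives.
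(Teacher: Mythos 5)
Your proposal has two genuine gaps, and they sit exactly where the paper's real work is done.

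First, your pressure-elimination mechanism produces no constraint at all. Cross-differentiating the radial and axial momentum equations via $\partial_r\partial_z p=\partial_z\partial_r p$ is precisely how one derives the azimuthal vorticity equation for axisymmetric flow with swirl; it is an identity satisfied by \emph{every} solution, so it cannot force $\partial_t(v_r/v_z)$, or anything else, to be large. What remains of your third step is the assertion that a pressure producing the far-field acceleration ``can no longer balance'' the centrifugal term --- which is exactly the nonlocal statement you concede you cannot prove, so the heart of the argument is missing. The paper never estimates the pressure to obtain the velocity: it expresses $v_z=\rho\, g$ and $v_r=(\partial_z\bar R)v_z$ by conservation of flux through stream tubes (divergence theorem only, Proposition \ref{formula of velocities}), so $g'$ and $g''$ enter not through $\nabla p$ but through the geometry of the \emph{particle trajectory}: time varies along the trajectory, so $z$-differentiation along it picks up factors $g'/v_z$, giving $\theta''\approx -C(\beta)g'$, $\theta'''\approx -C(\beta)g''$, hence a curvature derivative $\partial_s\kappa$ of order $g''$. (Relatedly, your first step conflates streamline and trajectory, and claims $L^x$ confines ``the entire geometry''; but $L^x$ bounds only $\bar R$ and $\bar R^{-1}$, not the winding $\bar\Theta$, and the trajectory's curvature and torsion are precisely the quantities that blow up as $\delta\to0$.) The pressure then appears only through commutation of derivatives in the orthonormal Frenet frame along the trajectory, yielding the identities $3\kappa\partial_t|u|+\partial_s\kappa|u|^2=\partial_{\bar r}\partial_t|u|$ and $T\kappa|u|^2=\partial_{\bar z}\partial_t|u|$, and the contradiction comes from an extra scalar constraint that cylindrical commutation does not give you: axisymmetry applied to $\partial_t|u|$,
\begin{equation*}
0=\partial_\theta\partial_t|u|=(e_\theta\cdot n)\,\partial_{\bar r}\partial_t|u|+(e_\theta\cdot b)\,\partial_{\bar z}\partial_t|u|,
\end{equation*}
which is incompatible with those identities once the swirl tilts the frame ($n\cdot e_\theta<-1/2$, Lemma \ref{key estimate on torsion}) and $|\partial_s\kappa||u|^2$ dominates both $\kappa\partial_t|u|$ and $|\kappa T b\cdot e_\theta||u|^2$. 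Nothing in your outline plays the role of this constraint; it is where the hypotheses $v_\theta\approx1$ and $r$ bounded away from the axis are actually spent.

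Second, there is a structural obstruction to your plan of a direct pointwise estimate at $t=0$. The quantity $L^t$ contains only first time derivatives of the streamline data, and at $t=0$ all first time derivatives of the flow are determined by $u_0$ and $g'(0)$ alone (through the time-zero pressure); $g''(0)$ influences only second time derivatives, i.e. $\partial_t\nabla p$ --- you say so yourself when you locate $g''$ inside $\partial_t\partial_z p$. Hence no algebraic computation at the single time $t=0$ can convert the hypothesis $g''(0)>g'(0)/\delta^2$ of \eqref{flux condition} into a lower bound on $L^t(\bar r_0,z,0)$: a smooth function can have enormous second derivative and tiny first derivative at a point. The paper needs its contradiction-on-an-interval structure precisely for this reason: the Lemma shows that on every interval $I$ with $|I|<\beta^2\epsilon^5$ one of four scenarios occurs (there $g''$ acts through $\partial_s\kappa$ along the trajectory), and smoothness at $t=0$ then excludes all scenarios except $L^t\gtrsim1/\epsilon$. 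Without that interval-plus-continuity mechanism, or a substitute for it, your argument cannot close even if the pressure difficulty were resolved.
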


%\begin{cor}
%If $u_0\cdot e_\theta\not=0$ on the boundary, the corresponding
%value $\partial_t \bar R^{-1}$ on the boundary is always zero, 
%\eqref{only occur} never occur,
% and  it is in contradiction to \eqref{only occur}. 
%This means that  
%the corresponding Euler flow loses its smoothness instantaneously.
%\end{cor}

\begin{remark}
In contrast with \cite{TKWP},  
the transition of the pulsatile flow from the laminar to the turbulent may be 
affected by the Womersley number and the oscillatory Reynolds number
provided by the non-small swirl component.
\end{remark}

%\begin{remark}
%If 
%$v_\theta(0)\equiv 0$,
%the inflow is $v_z(r,0,t)=(1-t)^{-\beta}$ ($r\leq 1$), 
%then
%\begin{equation*}
%v_z(r,z,t)=(1-t)^{-\beta},\ v_\theta(r,z,t),\  v_r(r,z,t)\equiv 0
%\end{equation*}
%is the typical Euler solution. In this case $\rho\equiv 1$
%and it has stable laminar profile. 
%\end{remark}

%\begin{remark}
%Roughly saying, $\beta$ in \eqref{flux condition} is closely related to $p_s$ in \eqref{p_s},
%and $\delta$ in \eqref{flux condition} is closely related to $n$ in \eqref{n}.
%\end{remark}

\begin{remark}
This instability mechanism is (at a glance) different from wall turbulence.
It would be interesting to consider the interaction between this instability mechanism
and wall turbulence, and this is our future work.
\end{remark}

In the next section, we prove the main theorem.

\section{Proof of the main theorem.}
%\noindent
%{\it Proof of the main theorem.}\ 

Notations ``$\approx$" and ``$\lesssim$" are convenient. The notation ``$a \approx b$" means there is a positive constant $C>0$ such that 
\begin{equation*}
C^{-1}a\leq b\leq Ca,
\end{equation*}
and ``$a\lesssim 1$" means that there is a positive constant $C>0$ such that 
\begin{equation*}
0\leq a\leq C.
\end{equation*}
This constant $C$ is not  depending on neither $\epsilon$ nor $\delta$. 
Throughout this paper we use $C(\beta)$ (different from the above $C$) as a positive constant depending on $\beta$.
Now we define the particle trajectory.
The associated Lagrangian flow $\eta(t)$
% of $u = \nabla^\perp\Delta^{-1}\omega$ 
is a solution of the initial value problem 
\begin{align} \label{eq:flowC} 
&\frac{d}{dt}\eta(x,t) = u(\eta(x,t),t), 
\\  \label{eq:flow-icC} 
&\eta(x,0) = x. 
\end{align} 
To prove the main theorem, it is enough to show the following lemma:
\begin{lem}
Assume there is a unique  smooth solution to the Euler equations \eqref{Euler eq.}
with smooth initial data satisfying $|L^x|\leq \beta$ for some positive constant $\beta$.
% in $\Omega\times [0,1)$.
For any $x\in \Omega$ satisfying $u_0(x)\cdot e_\theta\approx 1$ and $x\cdot e_r>1/\beta$, 
%any small time interval $I\subset[0,1)$
 and any  $\epsilon>0$, then there is  $\delta>0$
such that 
for any small time interval $I$ with $|I|< \beta^2\epsilon^5$,
at least either of the following four cases must happen:
\begin{itemize}

\item
$L^x(\bar r_0,z,t)>1/\beta$,

\item  
%$|\partial_t R^{-1}|+
%|\partial_t\partial_{\bar r_0}\bar R|+
%|\partial_t\partial_z\bar R|\gtrsim 
$L^t(\bar r_0,z,t)\gtrsim
1/\epsilon$, 
%for  some $t\in I$,
% (cf. Remark \ref{no swirl}),

\item  
  $|\eta(x,t)\cdot e_r|<\beta$,

\item

$\bar r_0<\beta$,

\end{itemize}
for some $t\in I$, with any inflow-outflow $g(t)$ satisfying
\begin{equation*}
1/\beta^5\leq g(t)\leq 1/\epsilon^5\quad\text{and}\quad 
\frac{1}{\delta^3}<\frac{g'(t)}{\delta^2}<g''(t)\quad\text{in}\quad
t\in I,
\end{equation*}
where $(\bar r_0, z)=(\gamma^{-1}\circ\eta)(x,t)$
(in this case $\bar r_0$ and $z$ are depending on $t$).
%Note that  if $\epsilon$ tends to zero, then $\delta$ also tends to zero.
\end{lem}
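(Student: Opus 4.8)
\emph{Plan.} The plan is to argue by contradiction. Fixing the interval $I$, I would suppose that all four alternatives fail simultaneously at every $t\in I$, so that throughout $I$ one has $L^x\le\beta$, the trajectory stays off the axis with $|\eta(x,t)\cdot e_r|\ge 1/\beta$, the label satisfies $\bar r_0\ge 1/\beta$, and $L^t$ stays below the threshold $\sim 1/\epsilon$. The goal is then to show that these four bounds are incompatible with the forcing $g'(t)>1/\delta$, $g''(t)>1/\delta^3$ once $\delta$ is chosen small in terms of $\epsilon$ and $\beta$. Two structural facts make the non-small swirl essential and would be recorded first. Along each trajectory the angular momentum $rv_\theta$ is conserved, since \eqref{axisymmetricEuler-1} is exactly $\frac{D}{Dt}(rv_\theta)=0$; combined with $u_0(x)\cdot e_\theta\approx1$, $\bar r_0\ge1/\beta$ and the bound $L^x\le\beta$ (which keeps $\bar R$ comparable to $\bar r_0$ so that $r\approx\bar r_0$ along the path), this propagates $v_\theta\approx1$ on $I$, so the centrifugal term $v_\theta^2/r$ is genuinely present and bounded below. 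Second, the far-field condition $u\to(0,0,g(t))$ forbids a purely columnar swirl: $v_\theta$ must decay as $z\to\pm\infty$, hence $\partial_z v_\theta\not\equiv0$ near the particle.

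Next I would set up the orthonormal moving frame. Through the particle the streamline $\gamma(z)$ has tangent $\partial_z\gamma=(v_r/v_z)e_r+(v_\theta/v_z)e_\theta+e_z$, and the identities $\partial_z\bar R=v_r/v_z$ and $\bar R\,\partial_z\bar\Theta=v_\theta/v_z$ let me express its Frenet curvature and torsion (which require up to three $z$-derivatives of $\gamma$, matching the three derivatives built into $L^x$) entirely through $\bar R,\bar\Theta$ and their $z$-derivatives. Under $L^x\le\beta$, together with $\bar r_0,\,|\eta\cdot e_r|\ge1/\beta$ (bounding $1/\bar R$) and $v_z\approx g\ge\beta^{-5}>0$ (bounding $1/v_z$), I expect the curvature, the torsion, and the tangential velocity gradients to be controlled by $C(\beta)$. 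Thus the \emph{instantaneous} geometry carried by $(u\cdot\nabla)u$ is fully under control, and any large effect must live in the unsteady part $\partial_t u$.

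The dynamics is driven by $-\nabla p=\partial_t u+(u\cdot\nabla)u$, and the heart of the argument is a pressure estimate. The axial balance $\frac{Dv_z}{Dt}=-\partial_z p$, the far-field value $\partial_z p\to-g'$, and the compatibility relation $\partial_z\partial_r p=\partial_r\partial_z p$, which reads
\[
\frac{2v_\theta\,\partial_z v_\theta}{r}=\partial_z\!\Big(\frac{Dv_r}{Dt}\Big)-\partial_r\!\Big(\frac{Dv_z}{Dt}\Big)
\]
(equivalently the azimuthal-vorticity identity $\frac{D}{Dt}(\omega_\theta/r)=r^{-4}\partial_z((rv_\theta)^2)$), together show that the large forcing $g'$ actually reaches the particle. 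Indeed $\partial_t v_z=-\partial_z p-(u\cdot\nabla)v_z$, where the pressure correction from the $z$-localized swirl is of size $C(\beta)$ and the advection term is bounded by $C(\beta)g^2\le C(\beta)\epsilon^{-10}$; since $g'>1/\delta$, choosing $\delta$ small forces $\partial_t v_z=g'(1+o(1))$, and one further time differentiation of the axial equation yields $\partial_t^2 v_z=g''(1+o(1))$. The non-small swirl enters through the displayed relation: with $v_\theta\approx1$ its left side is non-degenerate, so the forcing cannot be absorbed by the bounded steady geometry and must appear in the radial response $v_r$ as well.

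Finally I would convert these large time-derivatives into a lower bound on $L^t$. Differentiating $\partial_z\bar R=v_r/v_z$ in $t$ gives $\partial_t\partial_z\bar R=(\partial_t v_r)/v_z-(v_r/v_z)(\partial_t v_z)/v_z$, and similarly for $\partial_t\bar R^{-1}$ and $\partial_t\partial_{\bar r_0}\bar R$. Substituting $\partial_t v_z\approx g'$ and (via the time-differentiated radial equation) the response forced by $\partial_t^2 v_z\approx g''$, while all spatial quantities remain $\le C(\beta)$ and $L^x\le\beta$, the identities can only balance if one of the three terms defining $L^t$ is of order a fixed positive power of $g'$ (resp.\ $g''$), hence $\gtrsim g'/g^2\gtrsim\epsilon^{10}/\delta\gg1/\epsilon$ for $\delta$ small. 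Choosing $\delta$ small enough in terms of $\epsilon,\beta$ — and using $|I|<\beta^2\epsilon^5$ to keep the two families of constraints on $g$ compatible, so that $g$ stays in $[\beta^{-5},\epsilon^{-5}]$ across $I$ despite the large derivatives — contradicts $L^t\lesssim1/\epsilon$, completing the four-way dichotomy. The main obstacle will be this pressure step: because $p$ is nonlocal one must show rigorously that the enormous axial forcing $g',g''$ is not screened and genuinely enters $\partial_t u$ at the particle, and that, given the bounded streamline geometry and the non-degenerate swirl, it cannot be accommodated without violating either $L^x\le\beta$ or the $L^t$ bound; controlling $\nabla p$ near the particle via the compatibility/vorticity relation, uniformly over $I$, is where the real work lies.
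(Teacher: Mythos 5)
There is a genuine gap, and it sits exactly at the step you yourself flag as ``where the real work lies'': the conversion of $\partial_t v_z\approx g'$ and $\partial_t^2 v_z\approx g''$ into a lower bound on $L^t$. That implication is false. $L^t$ measures time derivatives of the stream-tube geometry ($\bar R^{-1}$, $\partial_{\bar r_0}\bar R$, $\partial_z\bar R$), not of the velocity, and the geometry is insensitive to the size of $g$: by mass conservation alone (Proposition \ref{formula of velocities}; no pressure estimate is needed for this, contrary to your route through $\nabla p$) one has $v_z=\rho g$ and $v_r=(\partial_z\bar R)\,v_z$, so $v_r/v_z=\partial_z\bar R$ is purely geometric and the factor $g$ cancels identically in your key identity $\partial_t\partial_z\bar R=(\partial_t v_r)/v_z-(v_r/v_z)(\partial_t v_z)/v_z$: the terms proportional to $g'$ on the right cancel exactly, leaving a tautology rather than a constraint. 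Remark \ref{no swirl} of the paper is the extreme counterexample: $u=(0,0,g(t))$ has $\partial_t v_z=g'$ arbitrarily large yet $L^t\equiv 0$; likewise the swirling column $u=v_\theta(r)e_\theta+g(t)e_z$ (an exact solution of the axisymmetric Euler equations, excluded only by the far-field decay of $v_\theta$) has $v_\theta\approx 1$, bounded geometry, huge $\partial_t v_z$, and $L^t\equiv 0$. So ``bounded geometry $+$ non-small swirl $+$ huge forcing at the particle'' is by itself perfectly consistent; your appeal to the non-degeneracy of $2v_\theta\partial_z v_\theta/r$ and to the far-field condition is never turned into a quantitative statement that excludes this, and ``$v_\theta$ decays at infinity, hence $\partial_z v_\theta\not\equiv 0$ near the particle'' is a non sequitur. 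A further error in the same direction: your claim that under $L^x\le\beta$ the curvature and torsion of the relevant curve are controlled by $C(\beta)$ fails for the particle trajectory (as opposed to an instantaneous streamline) — the paper computes $\theta''\approx -C(\beta)g'$ and $\theta'''\approx -C(\beta)g''$ (Proposition \ref{estimates of theta}), so the trajectory's curvature and $\partial_s\kappa$ blow up as $\delta\to 0$; this blow-up is the instability itself, not something to be bounded away.

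The paper's actual mechanism has no counterpart in your proposal. It (i) computes the Frenet--Serret data of the particle trajectory, showing $\kappa$ and $\partial_s\kappa$ are dominated by $\theta''\sim -g'$ and $\theta'''\sim -g''$, with the normal vector satisfying $n\cdot e_\theta<-1/2$ (this is where $v_\theta\approx 1$ enters essentially, fixing the direction of the curvature vector); (ii) proves $-\nabla p\cdot \tau=\partial_t|u|$ (Lemma \ref{Euler flow along the trajectory}) and then, using commutativity of second derivatives of the pressure in an orthonormal moving frame $(\tau,n,b)$ attached to the trajectory, derives the identities $\partial_{\bar r}\partial_t|u|=3\kappa\partial_t|u|+\partial_s\kappa|u|^2$ and $\partial_{\bar z}\partial_t|u|=T\kappa|u|^2$; (iii) invokes axisymmetry, $0=\partial_\theta\partial_t|u|=(e_\theta\cdot n)\,\partial_{\bar r}\partial_t|u|+(e_\theta\cdot b)\,\partial_{\bar z}\partial_t|u|$, and checks via Lemma \ref{key estimate on torsion} that this vanishing combination is in fact bounded below by $\tfrac{1}{2}|\partial_s\kappa|\,|u|^2>0$, because the $g''$-term dominates both the $\kappa\,\partial_t|u|$ term (order $g'$) and the torsion term — a contradiction. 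Without something playing the role of (i)--(iii), i.e.\ a mechanism by which the swirl couples the forcing to the \emph{geometry}, the four-way dichotomy does not follow from your estimates. (Minor: the negations you assume swap $\beta$ and $1/\beta$ relative to the statement; they should read $L^x\le 1/\beta$, $|\eta\cdot e_r|\ge\beta$, $\bar r_0\ge\beta$.)
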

Since the time interval $I$ is arbitrary, we see that $L^x$ or $\eta\cdot e_r$ or $\bar r_0$ is not continuous at the initial time $t=0$,
or $L^t\gtrsim 1/\epsilon$ at the initial time.
The discontinuity contradicts the smoothness assumption, 
thus 
\begin{equation*}
%|\partial_t\bar R^{-1}|+|\partial_t\partial_{\bar r_0}\bar R|+
%|\partial_t\partial_z\bar R|
L^t\gtrsim 
1/\epsilon
\end{equation*}
only occurs.
In what follows, we prove the above lemma.
For any time interval $I$ with $|I|\leq \beta^2\epsilon^5$,
assume that the axisymmetric smooth Euler flow satisfies the following both conditions:
\begin{itemize}
\item
$L^x(\bar r_0,z,t)\leq 1/\beta$ 
and
$
L^t(\bar r_0,z,t)
%|\partial_t\bar R^{-1}|+|\partial_t\partial_{\bar r_0}\bar R|+
%|\partial_t\partial_z\bar R|
\lesssim 1/\epsilon$ for
any $t\in I$,

\item

$|\eta(x,t)\cdot e_r|\geq \beta$ 
and 
$\bar r_0\geq \beta$
for any $t\in I$,
\end{itemize}
where 
 $(\bar r_0,z)=(\gamma^{-1}\circ\eta)(x,t)$.
%We determine  $\delta$ later.
%for $t\in \tilde I$.
%\end{definition}
% in $\tilde I$ sufficiently close to $1$.
First we express $v_z$ and $v_r$ by using $\bar R$ and $\bar R^{-1}$. To do so, we define the cross section of the stream-tube (annulus). Let 
$B_{-\infty}(\bar r_0)=\{x\in\mathbb{R}^3: |x_h|<\bar r_0,\ x_3=-\infty\}$ and let
\begin{equation*}
  A(\bar r_0,z,\epsilon,t):=\bigcup_{x\in B_{-\infty}(\bar r_0+\epsilon)\setminus B_{-\infty}(\bar r_0)}
\gamma(x,z,t).
\end{equation*}
We see that  its measure is  
\begin{equation*}
|A(\bar r_0,z,\epsilon,t)|=\pi\left(\bar R(\bar r_0,\epsilon,z,t)^2-\bar R(\bar r_0,z,t)^2\right).
\end{equation*}
\begin{definition} (Inflow propagation.)\ 
Let $\rho$ be such that  
%which is needed to expresses magnitude of the flow by using the 
% $\rho$: 

\begin{equation*}
\rho(\bar r_0,z,t):=\lim_{\epsilon\to 0}\frac{|A(\bar r_0,-\infty,\epsilon,t)|}{|A(\bar r_0,z,\epsilon,t)|}.
\end{equation*}
\end{definition}
We see that
\begin{equation*}
\rho(\bar r_0,z,t)=\frac{\partial_{\bar r_0}\bar R(\bar r_0,-\infty,t) \bar R(\bar r_0,-\infty,t)}{\partial_{\bar r_0}\bar R(\bar r_0,z,t) \bar R(\bar r_0,z,t)}=\frac{\bar r_0}{\partial_{\bar r_0}\bar R(\bar r_0,z,t) \bar R(\bar r_0,z,t)}
=\frac{2\bar r_0}{\partial_{\bar r_0}\bar R(\bar r_0,z,t)^2}.
\end{equation*}
%In the definition of  ``rate of laminar profile" $L$, we are taking this $\rho$ into account.
%\begin{remark}
\begin{remark}
By the assumption on $L^x$, 
%If the laminar profile is uniformly smooth, then 
we have  the 
estimates of the inflow propagation $\rho$:
\begin{equation*}
%\rho\approx 1
%,\  
|\partial_z\rho|, |\partial_z^2\rho|,
|\partial_{\bar r_0}\rho|, |\partial_{\bar r_0}^2\rho|
\lesssim C(\beta)
%\leq C
\quad\text{for}\quad t\in I
%\ |(\partial_t\rho)(1,Z(z_0,t).t)|\leq \frac{\epsilon}{(1-t)}.
\end{equation*}
with some positive constant $C(\beta)$.
%For example, we see 
%\begin{eqnarray*}
%|\partial_{\bar r_0}\rho|
%&\leq& 
%\left|\frac{2\bar r_0\partial_{\bar r_0}(\partial_{\bar r_0}\bar R^2)}{(\partial_{\bar r_0}\bar R^2)^2}\right|
%+\left|\frac{2}{\partial_{\bar r_0}\bar R^2}\right|\\
%&\leq&
%\left|\frac{4\bar r_0^2}{(\partial_{\bar r_0}\bar R^2)^2}\frac{\partial_{\bar r_0}(\partial_{\bar r_0}\bar R^2)}{2r_0}\right|
%+\left|\frac{2\bar r_0}{\partial_{\bar r_0}\bar R^2}\frac{1}{\bar r_0}\right|\\
%&\lesssim &
%\frac{(L^x)^3}{\beta^3}+\frac{L^x}{\beta}.
%\end{eqnarray*}
Note that $\bar R(\bar r_0, z,t)>\beta$ for $(\bar r_0,z)=(\gamma^{-1}\circ \eta)(x,t)$ due to $\eta(x,t)\cdot e_r>\beta$.
%By the assumption on $L^x$, then we have the desired estimate of $|\partial_{\bar r_0}\rho|$.
%In this case, the constant $C$ can be   controlled by $1/\delta^4$.
\end{remark}

Since 
\begin{equation*}
2\pi \int_{\bar R(\bar r_0,z,t)}^{\bar R(\bar r_0+\epsilon,z,t)}u_z(r',z,t)r'dr'=2\pi\int_{\bar r_0}^{\bar r_0+\epsilon}u_z(r',-\infty,t)r'dr'
\end{equation*}
 by divergence-free and Gauss's divergence theorem,
 we can figure out $v_z$ by using the inflow propagation $\rho$, 
\begin{eqnarray*}
v_z(r,z,t)&=&\lim_{\epsilon\to 0}\frac{2\pi}{|A(\bar r_0,z,\epsilon,t)|}\int_{\bar R(\bar r_0,z,t)}^{\bar R(\bar r_0+\epsilon,z,t)}v_z(r',z,t)r'dr'\\
&=&
\lim_{\epsilon\to 0}\frac{|A(\bar r_0,-\infty,\epsilon,t)|}{|A(\bar r_0,z,\epsilon,t)|}\frac{2\pi}{|A(\bar r_0,-\infty,\epsilon,t)|}\int_{\bar r_0}^{\bar r_0+\epsilon}v_z(r',-\infty,t)r'dr'\\
&=&
\rho(\bar r_0,z,t)u_z(\bar r_0,-\infty,t).
\end{eqnarray*}
%for any two points $x\in \partial A(\bar r_0,0,0,t)$ and $x'\in \partial A(\bar r_0,z,0,t)$ connected by a streamline. 
%passing through $\partial A(r_0,0,0,t)$ and $\partial A(r_0,z,0,t)$.
%Let $\bar R=\bar R(\bar r_0,z,t)$ be a radius of $\partial A(\bar r_0,z,0,t)$, and 
%with $r=\bar R(\bar r_0,z,t)$. 
%and its inverse 
%$\bar r_0=\bar R^{-1}(r,z,t)$.
%\end{remark}
Thus we have the following proposition.

\begin{prop}\label{formula of velocities}
We have the following formula of $v_z$ and $v_r$:
\begin{equation}\label{!}
v_z(r,z,t)=\rho(\bar R^{-1}(r,z,t),z,t)v_z(\bar R^{-1}(r,z,t),-\infty,t)=\rho(\bar R^{-1},z,t)g(t)
\end{equation}
and 
\begin{equation}\label{!!}
v_r(r,z,t)=(\partial_z\bar R)(\bar R^{-1}(r,z,t), z,t)v_z(r,z,t).
\end{equation}
\end{prop}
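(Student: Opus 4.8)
The plan is to establish the two identities separately: formula \eqref{!} expresses $v_z$ through conservation of volume flux along the stream-tube, while \eqref{!!} is a direct consequence of the defining ODE for the axis-length streamline. Most of the work for \eqref{!} is in fact already in hand from the flux computation displayed immediately before the statement; what remains is to recast it in terms of the inverse map $\bar R^{-1}$ and the boundary inflow $g$.

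For \eqref{!}, I would begin from the volume-flux balance
\[
2\pi\int_{\bar R(\bar r_0,z,t)}^{\bar R(\bar r_0+\epsilon,z,t)}v_z(r',z,t)\,r'\,dr'
=2\pi\int_{\bar r_0}^{\bar r_0+\epsilon}v_z(r',-\infty,t)\,r'\,dr',
\]
which holds because $u$ is divergence-free (see \eqref{axisymmetricEuler-2}) and the lateral surface of the stream-tube $A(\bar r_0,z,\epsilon,t)$ carries no flux: it is ruled by the streamlines $\gamma$, to which $u$ is tangent by construction, so Gauss's theorem equates the flux through the two annular caps. Dividing both sides by $|A(\bar r_0,z,\epsilon,t)|$ and letting $\epsilon\to0$, the left-hand side converges to $v_z(r,z,t)$ with $r=\bar R(\bar r_0,z,t)$ by the mean-value theorem, while the right-hand side produces the area ratio, which is precisely $\rho(\bar r_0,z,t)$, times $v_z(\bar r_0,-\infty,t)$. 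This yields $v_z(r,z,t)=\rho(\bar r_0,z,t)\,v_z(\bar r_0,-\infty,t)$. To finish, I substitute $\bar r_0=\bar R^{-1}(r,z,t)$ --- legitimate since $\partial_{\bar r_0}\bar R>0$ guarantees the inverse exists --- and invoke the inflow-outflow boundary condition $u(\,\cdot\,,t)\to(0,0,g(t))$ from \eqref{Euler eq.}, which forces $v_z(\bar r_0,-\infty,t)=g(t)$ uniformly in $\bar r_0$.

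For \eqref{!!}, I would read off the radial velocity directly from the definition of $\gamma$. By construction $\partial_z\gamma=(u/(u\cdot e_z))(\gamma,t)$, and projecting onto $e_r$ gives $\partial_z\bar R=v_r/v_z$ along the streamline labelled by $\bar r_0$. Since the streamline passing through the Eulerian point $(r,z)$ is exactly the one with label $\bar r_0=\bar R^{-1}(r,z,t)$, evaluating this relation there and solving for $v_r$ gives $v_r(r,z,t)=(\partial_z\bar R)(\bar R^{-1}(r,z,t),z,t)\,v_z(r,z,t)$, which is \eqref{!!}.

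The only genuinely delicate point is the passage to the limit and the accompanying change of variables: one must confirm that the annular caps shrink nicely, that the mean-value extraction is valid (both rest on the assumed smoothness of the solution and on $\partial_{\bar r_0}\bar R>0$), and that the streamline-label evaluation is consistent with the Eulerian evaluation at $(r,z)$. These are bookkeeping issues rather than analytic difficulties; the substance of the proposition is the identification of the flux ratio with $\rho$ and of the $z=-\infty$ trace of $v_z$ with $g$.
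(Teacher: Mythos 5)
Your proposal is correct and follows essentially the same route as the paper: the paper's own derivation of \eqref{!} is precisely the flux balance from divergence-free plus Gauss's theorem, divided by the annulus measure $|A(\bar r_0,z,\epsilon,t)|$ and passed to the limit $\epsilon\to 0$ to produce $\rho(\bar r_0,z,t)\,v_z(\bar r_0,-\infty,t)$, followed by the substitution $\bar r_0=\bar R^{-1}(r,z,t)$ and the identification $v_z(\cdot,-\infty,t)=g(t)$ from the inflow-outflow condition, while \eqref{!!} is read off, exactly as you do, from the streamline relation $\partial_z\bar R=v_r/v_z$ noted right after Definition \ref{Stream-shell near the boundary}. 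Your explicit justification that the lateral surface of the stream-tube carries no flux (being ruled by streamlines tangent to $u$) is a detail the paper leaves implicit, but it is the same argument.
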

By the above proposition, we see 
%Combining \eqref{!} and \eqref{!!}, we see
\begin{equation*}
\beta^{-6}\lesssim|v_z|\lesssim \beta^{-2}\epsilon^{-5}\quad\text{and}\quad |v_r|\lesssim \beta^{-3}\epsilon^{-5}.
\end{equation*}
%where $C(\epsilon)$ is a positive constant depending on $\epsilon$.

We now define the Lagrangian flow along $r$,$z$-direction. Let 
\begin{eqnarray}\label{2D-trajectory-1}
& &\frac{d}{dt}Z(t)=v_z(R(t),Z(t),t),\\
\nonumber
& &Z(0)=z_0
\end{eqnarray}
and
\begin{eqnarray}\label{2D-trajectory-2}
& &\frac{d}{dt}R(t)=v_r(R(t),Z(t),t),\\
\nonumber
& &R(0)=r_0
\end{eqnarray}
with $Z(t)=Z(r_0,z_0,t)$ and $R(t)=R(r_0,z_0,t)$.
%By the assumption, existence of the smooth solution to the Euler equations \eqref{Euler eq.}, 
%$v_z\in C^1_tC_z\cap C_tC^1_z$, 
% with the usual ODE theory,  the above $Z$ and $R$ uniquely exist and smooth.
%\begin{remark}
%We can further generalize the inflow condition $g$ to ($z_0$ may be negative infinity)
%\begin{equation*}
%g(r_0,z_0,t)=u_z(\bar R^{-1}(r,z,t),0,t)=(1-t)^\beta
%\end{equation*}
%with $r=R(r_0,z_0,t)$ and $z=Z(r_0,z_0,t)$ (c.f. \eqref{!}).
%\end{remark} 
%We call ``particles are uniformly  apart  from the axis" if and only if 
By the second assumption: $|\eta(x,t)\cdot e_r|\geq \beta$, $R$ satisfies the following: 
%For any $\tilde r_0>0$, there is $C>0$ such that 
\begin{equation*}
R(t)>\beta
%\quad\text{for}\quad r_0>\tilde r_0
\quad\text{for}\quad t\in I.
\end{equation*}

Since $v_z>0$, then we can define the inverse of $Z$ in $t$: $t=Z_t^{-1}(z,r_0,z_0)$.
In this case we can estimate $\partial_zZ^{-1}_t=1/\partial_tZ=1/v_z
%\approx 1/(\rho g)
\lesssim \beta^6$ and $\partial_z^2Z^{-1}_t=-\frac{\partial_zv_z}{v_z^2}$.
First we show the following estimates.
\begin{lem}\label{estimates of v}
For $t\in I$,
we have the following estimates along the axis-length trajectory:
\begin{equation}\label{partial_zv_z}
\begin{cases}
%& &
\partial_zv_z(R(Z^{-1}_t(z)),z,Z_t^{-1}(z))\approx (\partial_tZ)^{-1}\rho g'+\text{remainder},\\
%\nonumber
%& &
\partial_z^2Z_t^{-1}\approx- (\partial_tZ)^{-3}\rho g'+\text{remainder},\\
%\nonumber
%& &
\partial_z^2v_z(R(Z^{-1}_t(z)),z,Z_t^{-1}(z))\approx  (\partial_tZ)^{-2}\rho g''+\text{remainder},\\
%\nonumber
%& &
|\partial_zv_r(R(Z^{-1}_t(z)),z,Z_t^{-1}(z))|\lesssim  (\partial_tZ)^{-1}\rho g'+\text{remainder},\\
%\nonumber
%& &
|\partial_z^2v_r(R(Z^{-1}_t(z)),z,Z_t^{-1}(z))|\lesssim  (\partial_tZ)^{-2}\rho g''+\text{remainder},
%\quad\text{with}\quad t=Z^{-1}_t(z),
\end{cases}
\end{equation}
where ``remainder"  is small compare with the corresponding main term provided by small $\delta>0$.
Moreover, we have
\begin{equation}\label{v_theta estimate}
%C^{-1}\leq 
v_\theta(R(Z^{-1}_t(z)),z,Z^{-1}_t(z))\approx 1
%\leq C
\end{equation} 
for some $z$ sufficiently close to $z_0$,
%for some $C>0$ 
%(it is reasonable to assume $v_\theta(r_0,z_0,0)>0$),
%Just taking derivatives to \eqref{v_theta} in $z$-valuable, then we also have 
%can also control 
\begin{eqnarray}\label{partial_zv_theta}
& &|\partial_zv_\theta(R(Z^{-1}_t(z)),z,Z_t^{-1}(z))|\lesssim C(\beta),\\
\nonumber
& &|\partial_z^2v_\theta(R(Z^{-1}_t(z)),z,Z_t^{-1}(z))|\lesssim (\partial_t Z)^{-3}\rho g'+\text{remainder}
%\quad\text{with}\quad
%t=Z^{-1}_t(z)
\end{eqnarray}
and
\begin{equation*}
\partial_t|u(\eta(x,t),t)|\lesssim C(\beta)g'+\text{remainder}.
\end{equation*}
\end{lem}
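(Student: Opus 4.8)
The plan is to derive every bound in the lemma from the two explicit formulas of Proposition~\ref{formula of velocities}, $v_z=\rho g$ and $v_r=(\partial_z\bar R)v_z$, together with one structural observation about differentiation along the trajectory. Restricting a velocity component to the reparametrized trajectory $z\mapsto(R(Z_t^{-1}(z)),z,Z_t^{-1}(z))$ turns it into a function of $z$ alone, and the symbols $\partial_z v_z,\partial_z^2 v_z,\partial_z v_r,\dots$ in \eqref{partial_zv_z} denote the $z$-derivatives of these restrictions. By the chain rule each such $z$-differentiation produces a term carrying the factor $\partial_z Z_t^{-1}=1/v_z=(\partial_tZ)^{-1}$, and when that derivative lands on the explicit time slot of $g$ it replaces $g$ by $g'$ (and a second differentiation replaces $g'$ by $g''$). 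Under \eqref{flux condition} one has $g'>1/\delta$ and $g''>1/\delta^{3}$, so, choosing $\delta$ small with $\delta\ll\epsilon$, the factors $g'$ and $g''$ dominate both $g$ itself and every quantity controlled by $L^x\le1/\beta$ and $L^t\lesssim1/\epsilon$ (for instance $\partial_t\rho$, which is governed by the term $\partial_t\partial_{\bar r_0}\bar R$ appearing in $L^t$); all such subordinate contributions are exactly what is collected into ``remainder''. The size bounds $\beta^{-6}\lesssim|v_z|\lesssim\beta^{-2}\epsilon^{-5}$ and $|v_r|\lesssim\beta^{-3}\epsilon^{-5}$, together with $R(t)>\beta$ and $|\partial_z\bar R|\le L^x\le1/\beta$, keep all constants consistent.

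For the first three lines of \eqref{partial_zv_z} I would differentiate $v_z=\rho g$ restricted to the trajectory. Its total $z$-derivative has three chain-rule pieces; the two in which $\partial_r$ or the pure $\partial_z$ acts carry only $g$ and are controlled by the derivatives of $\rho$ and $\bar R^{-1}$ (all $\lesssim C(\beta)$), while the piece $\partial_t v_z\cdot\partial_z Z_t^{-1}$ contributes $(\partial_tZ)^{-1}\rho g'$, the stated leading term. Substituting this into the given identity $\partial_z^2 Z_t^{-1}=-\partial_z v_z/v_z^{2}$ and using $v_z=\partial_tZ$ along the trajectory immediately yields $-(\partial_tZ)^{-3}\rho g'$. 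Differentiating $\partial_z v_z$ once more and keeping only the term in which $\partial_t$ hits $g'$ gives $(\partial_tZ)^{-2}\rho g''$; the remaining terms (where the extra derivative falls on $\rho$, on $(\partial_tZ)^{-1}$, or on the $r$- and $z$-slots) carry at most $g'$ and are demoted to remainder since $g''>g'/\delta^{2}\gg g'$. The two $v_r$-lines then follow from $v_r=(\partial_z\bar R)v_z$: the derivatives of $v_z$ reproduce the same $g',g''$ scalings, while the extra factor $\partial_z\bar R$ and its controlled derivatives only multiply by $C(\beta)$, so one obtains the upper bounds (not equivalences, because $\partial_z\bar R$ may be small).

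The swirl estimates rest on the conservation of angular momentum $\frac{D}{Dt}(rv_\theta)=0$, which is \eqref{axisymmetricEuler-1} rewritten along $\eta$. Hence $v_\theta=r_0 v_{\theta,0}/R$ along the trajectory; with $v_{\theta,0}=u_0\cdot e_\theta\approx1$, $r_0,R>\beta$, and $R\to r_0$ as $z\to z_0$, this gives \eqref{v_theta estimate}. Differentiating $v_\theta=c/R(Z_t^{-1}(z))$ in $z$ produces $-cR^{-2}(v_r/v_z)$, and the crucial cancellation $v_r/v_z=\partial_z\bar R$ removes all $g$-dependence, leaving the $C(\beta)$ bound for $\partial_z v_\theta$; a second differentiation reintroduces $\partial_z v_r$ and $\partial_z v_z$ and hence the $g'$-term claimed for $\partial_z^2 v_\theta$. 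Finally, for $\partial_t|u(\eta(x,t),t)|$ I would differentiate $|u|^2=v_r^2+v_\theta^2+v_z^2$ along $\eta$; since $v_\theta\approx1$ is bounded while $v_z=\rho g$ and $v_r=(\partial_z\bar R)v_z$ are both driven by $g$, the dominant contribution is $\partial_t v_z\approx\rho g'$ times $C(\beta)$-factors, giving $\lesssim C(\beta)g'+\text{remainder}$.

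The main obstacle is not any single computation but the uniform bookkeeping that separates the genuine leading terms from the remainder. I would have to verify, term by term and using only $L^x\le1/\beta$, $L^t\lesssim1/\epsilon$, and the size bounds on $v_z,v_r$, that every cross term in the chain rule is smaller than its advertised main term by a factor tending to $0$ as $\delta\to0$; this is precisely where the form of \eqref{flux condition} and the choice $\delta\ll\epsilon$ are essential, since they force $g''\gg g'\gg1/\epsilon\gtrsim L^t$ and thereby make the $g$-only and $L^t$-driven pieces subordinate. The most delicate point is the swirl: the first derivative of $v_\theta$ must come out free of $g'$, which happens only through the exact identity $v_r/v_z=\partial_z\bar R$, so I would track that cancellation directly rather than bounding $v_r$ and $v_z$ separately.
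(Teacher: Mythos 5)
Your proposal is correct and takes essentially the same route as the paper: direct differentiation of the formulas $v_z=\rho g$ and $v_r=(\partial_z\bar R)v_z$ from Proposition~\ref{formula of velocities} with the chain-rule factor $\partial_z Z_t^{-1}=1/v_z$ supplying the $g'$, $g''$ main terms, the azimuthal Euler equation \eqref{axisymmetricEuler-1} integrated along the trajectory to control $v_\theta$ and its $z$-derivatives, and $|u|^2=v_r^2+v_\theta^2+v_z^2$ along $\eta$ for $\partial_t|u|$. Your angular-momentum formulation $v_\theta=r_0v_{\theta,0}/R$ is exactly the paper's Gronwall exponential $v_\theta(r_0,z_0,0)\exp\bigl\{-\int_0^t (v_r/R)\,d\tau\bigr\}$ in disguise (since $\dot R=v_r$ makes the exponential equal $r_0/R(t)$), so this is a repackaging rather than a genuinely different argument.
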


\begin{proof}
The estimates of $v_r$ and $v_z$ with several derivatives are just direct calculation
using the formulas \eqref{!} and \eqref{!!}. 
The point is just extracting the main terms composed by $g'$ or $g''$.
%Here we focus on the estimate of $v_\theta$ and $\partial_t|u|$.
 %we can estimate $v_r$ and $v_z$ along the particle trajectory in $z$-valuable (note that the dominant terms are always $g'$ or $g''$).
%Thus we obtain \eqref{partial_zv_z}.
Here we  control $v_\theta$ by using \eqref{partial_zv_z}.
%the estimates of $D\eta_{2D}^{-1}$, $D\eta_{2D}$ and $D^2(\eta_{2D}^{-1})$.
By \eqref{axisymmetricEuler-1} we see that 
\begin{equation*}
\partial_t v_\theta(R(t),Z(t),t)=v_\theta(r_0,z_0,0)-\frac{v_r(R(t),Z(t),t)v_\theta(R(t),Z(t),t)}{R(t)}.
\end{equation*}
Applying the Gronwall equality,  we see
\begin{equation}\label{v_theta}
v_\theta(R(Z^{-1}_t(z)),z,Z^{-1}_t(z))=v_\theta(r_0,z_0,0)\exp\bigg\{-\int_0^{Z_t^{-1}(z)}
\frac{v_r(R(\tau),Z(\tau),\tau)}{R(\tau)}d\tau\bigg\}.
\end{equation}

\begin{remark}\label{NS difficult}
To obtain the formula of $v_r$ and $v_z$, we are only using the function $\bar R$, $g$
and Gauss's divergence theorem. Namely, we do not need to use  the Euler equations.
However, to obtain the above formula of $v_\theta$, we essentially use the Euler equation \eqref{axisymmetricEuler-1}.
Thus, to pursue the  the Navier-Stokes flow case,
 we need some new idea.
\end{remark}

Since $|v_r|\lesssim \beta^{-3}\epsilon^{-5}$, $R(t)>\beta$ and choose $z$ sufficiently close to $z_0$, %(depending only on $\epsilon$)
 we have \eqref{v_theta estimate}.
More precisely, we choose a point $Z(t)$ such that  
\begin{equation*}
|Z(t)-z_0|\lesssim \beta^{-4}\epsilon^5.
\end{equation*}
In this case, by $Z(t)=z_0+\int_0^tv_z(R(\tau),Z(\tau),\tau)d\tau$ and $|v_z|\gtrsim\beta^{-6}$,
the time interval $I$ always satisfies $|I|\lesssim \beta^2\epsilon^5$.
%Such a point induces a contradiction later.
Just taking derivatives to \eqref{v_theta} in $z$-valuable, then we also have \eqref{partial_zv_theta}.
Now we estimate $\partial_t|u(\eta,t)|$.
We set  the usual trajectory $\eta(x,t)$ using 
smooth functions $R$, $Z$ and $\Theta$:
\begin{equation*}
\eta(x,t)
=(R(t)\cos\Theta(t),R(t)\sin\Theta(t),Z(t))
\end{equation*}
with
$e_\theta=(-\sin\Theta(t),\cos\Theta(t),0)$ and 
$e_r=(\cos\Theta(t),\sin\Theta(t),0)$. Then, by a direct calculation with $u=v_re_r+v_\theta e_\theta+v_ze_z$,  we see that 
\begin{equation}\label{multiply}
\frac{1}{2}\partial_t|u(\eta(x,t),t)|^2=\partial_t u\cdot u
=\partial_t v_rv_r+\partial_tv_\theta v_\theta+\partial_tv_zv_z
\end{equation}
along the trajectory.
In fact, since
\begin{equation*}
\partial_t\eta=\partial_t R(\cos\Theta,\sin\Theta,Z)+\partial_t\Theta(-R\sin\Theta,R\cos\Theta,Z),
\end{equation*}
and 
\begin{equation*}
v_\theta=\partial_t\eta\cdot e_\theta=\partial_t\Theta R,
\end{equation*}
we see $\partial_t\Theta=v_\theta/R$.
We multiply $u=v_r e_r+v_\theta e_\theta+v_ze_z$ to
\begin{equation*}
\partial_t u=\partial_tv_r e_r+\partial_tv_\theta e_\theta+\partial_tv_z e_z+
v_r\partial_t\Theta e_\theta-v_\theta\partial_t\Theta e_r,
\end{equation*}
then we have \eqref{multiply}.
Just take a time derivative to $v_z$ along the trajectory, then we have  
%and the main term is
\begin{equation*}
\partial_t(v_z(R(t),Z(t),t))=
%\partial_{\bar r_0}\rho\partial_t\bar R^{-1}g(t)+\partial_{\bar r_0}\rho\partial_z\bar R^{-1}\partial_t Z g
%+
%\partial_{\bar r_0}\rho\partial_r\bar R^{-1}\partial_tR g\\
%& &
%+\partial_z\rho\partial_t Z\rho
%+\partial_t\rho g
\rho g'+\text{remainder}.\\
%&\gg& 1
\end{equation*}
%for $t<\tilde I$ sufficiently close to $1$.
Here we used the fact that 
$
%|\partial_t\bar R^{-1}|+|\partial_t\partial_{\bar r_0}\bar R|
L^t
\lesssim 1/\epsilon$.
Thus
\begin{equation*}
\partial_t v_z v_z=\rho^2g'g+\text{remainder}\quad\text{for}\quad t\in I.
\end{equation*}
The remainder becomes small compare with the main term provided by small $\delta>0$.
%In what follows, we use the same meaning of ``remainder". 
By the similar calculation,
\begin{equation*}
\partial_t v_r v_r
=
(\partial_z\bar R)^2\rho^2g'g
+\text{remainder}
\quad\text{for}\quad t\in I.
\end{equation*}
Here we also used the fact that $
%|\partial_t\bar R^{-1}|+|\partial_t\partial_{\bar r_0}\bar R|+
%|\partial_t\partial_z\bar R|
L^t
\lesssim 1/\epsilon$.
Clearly
$|\partial_tv_\theta|\lesssim \beta^{-4}\epsilon^{-5}$ and then
 $|\partial_tv_\theta v_\theta|\lesssim \beta^{-4}\epsilon^{-5}$.
% is
%, for example,
%$\partial_t(v_z(R(t)))$ is
% not large anymore.
Thus we have 
\begin{equation*}
\partial_t|u(\eta(x,t),t)|^2\approx (1+(\partial_z\bar R)^2)\rho^2g'g
+\text{remainder}\quad\text{for}\quad t\in I
\end{equation*}
and then
\begin{equation*}
\partial_t|u(\eta(x,t),t)|\approx \rho g'
+\text{remainder}
\quad\text{for}\quad t\in I.
\end{equation*}
\end{proof}

Our strategy  is to estimate the curvature and torsion of the arc-length particle trajectory.
To do so, we need to define the axis-length trajectory $\tilde\eta$ in $z$.
\begin{definition} (Axis-length trajectory.)\ 
Let $\tilde\eta$ be such that 
\begin{equation*}
\tilde \eta(z):=(r(z)\cos\theta(z),r(z)\sin\theta(z),z)
\end{equation*}
and we choose $r(z)$ and $\theta(z)$
in order to satisfy $\tilde \eta(z)=\eta(x,Z^{-1}_t(z))$.
\end{definition}
For $t\in I$, we see
\begin{eqnarray*}
\partial_z\tilde\eta\cdot e_\theta&=&\frac{\partial_t\eta\cdot e_\theta}
{v_z}=r\theta'=\frac{v_\theta(R(Z^{-1}_t(z)),z,Z^{-1}_t(z))}{v_z(R(Z^{-1}_t(z)),z,Z^{-1}_t(z))}
%\approx 1/(\rho g)
\lesssim \beta^6,\\
\partial_z\tilde\eta\cdot e_r&=&\frac{v_r}{v_z}=(\partial_z\bar R)(\bar R^{-1},Z,t)=r',\quad |r'|\lesssim C(\beta,\epsilon)\quad\text{and}\quad |r''|\lesssim C(\beta,\epsilon)
\end{eqnarray*}
with some positive constant $C(\beta,\epsilon)$ depending on $\beta$ and $\epsilon$.
% and in this case 
%the blowup point is on $z=0$.
%We will clarify $\theta(z)$ itself later.
%with $t=Z^{-1}_t(z)$. 
In particular we need the estimates of $\theta''$ and $\theta'''$.
These estimates specify the curvature and torsion of the particle trajectory.
By Lemma \ref{estimates of v}, we can immediately obtain the following proposition.
\begin{prop}\label{estimates of theta}
By Lemma \ref{estimates of v}
%\eqref{partial_zv_z} and \eqref{partial_zv_theta}
 we have (just see the highest order term)
\begin{equation*}
\theta''(z)= -\frac{v_\theta\partial_zv_z}{v_z^2}+\text{remainder}=  -C(\beta)g'+\text{remainder}
\end{equation*}
 and 
\begin{eqnarray*}
\theta'''(z)&=& -\frac{v_\theta\partial_z^2v_z}{rv_z^2}+\frac{2v_\theta(\partial_zv_z)^2}{rv_z^3}+\text{remainder}\\
&=&
 -C(\beta)g''+\text{remainder}\quad \text{for}\quad  t\in I, 
\end{eqnarray*}
%with $t=Z^{-1}_t(z)$,
where ``remainder" is small compare with the corresponding main term provided by small $\delta>0$.
\end{prop}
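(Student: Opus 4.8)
# Proof Proposal for Proposition 2.9

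The plan is to obtain $\theta''$ and $\theta'''$ by differentiating the angular relation along the axis-length trajectory and then substituting the velocity estimates from Lemma 2.6. Recall from the definition of $\tilde\eta$ that
\begin{equation*}
r(z)\theta'(z)=\partial_z\tilde\eta\cdot e_\theta=\frac{v_\theta}{v_z},
\end{equation*}
where each velocity component is evaluated at $(R(Z^{-1}_t(z)),z,Z^{-1}_t(z))$. First I would solve for $\theta'=v_\theta/(rv_z)$ and then differentiate once more in $z$, using the quotient and product rules. Since $v_\theta\approx 1$ by \eqref{v_theta estimate}, $r\approx\bar R>\beta$ by hypothesis, and $v_z$ is bounded away from zero, the dominant contribution to $\theta''$ comes from differentiating the $1/v_z$ factor, which produces the term $-v_\theta\partial_z v_z/(v_z^2)$ after dividing out $r$; all other contributions (those involving $r'$ and $\partial_z v_\theta$) are controlled by $C(\beta)$ and hence fold into the remainder. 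I would then invoke the first line of \eqref{partial_zv_z}, namely $\partial_z v_z\approx(\partial_tZ)^{-1}\rho g'$, together with the bound $\partial_z v_\theta\lesssim C(\beta)$ from \eqref{partial_zv_theta}, to conclude $\theta''=-C(\beta)g'+\text{remainder}$, where the $C(\beta)$ absorbs the bounded factors $v_\theta$, $\rho$, $(\partial_tZ)^{-1}$, and $1/r$.

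For $\theta'''$ I would differentiate the expression for $\theta''$ once again. The highest-order derivatives appearing are $\partial_z^2 v_z$ (from differentiating $\partial_z v_z$) and $(\partial_z v_z)^2$ (from differentiating the $1/v_z^2$ factor), which give precisely the two displayed main terms $-v_\theta\partial_z^2 v_z/(rv_z^2)$ and $2v_\theta(\partial_z v_z)^2/(rv_z^3)$. Here the key inputs are the third line of \eqref{partial_zv_z}, $\partial_z^2 v_z\approx(\partial_tZ)^{-2}\rho g''$, and the second line of \eqref{partial_zv_theta}, $\partial_z^2 v_\theta\lesssim(\partial_tZ)^{-3}\rho g'+\text{remainder}$. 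The growth hierarchy in \eqref{flux condition}, namely $g'/\delta^2<g''$, guarantees that the $g''$-term genuinely dominates the $g'$-term, so every contribution built from lower derivatives of $v_\theta$ or from $r'$, $r''$ is subordinate and is swept into the remainder. This yields $\theta'''=-C(\beta)g''+\text{remainder}$.

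The main obstacle I anticipate is bookkeeping rather than conceptual: one must verify carefully that each discarded term really is small relative to the retained main term. This relies on the flux ordering $1/\delta^3<g'/\delta^2<g''$ to ensure that each successive derivative of $g$ dominates the previous one by a factor tending to infinity as $\delta\to 0$, and on the uniform $L^x\le 1/\beta$, $L^t\lesssim 1/\epsilon$ bounds to keep the coefficients $\rho$, $\partial_z\bar R$, $(\partial_tZ)^{-1}$, and their first two $z$-derivatives bounded by $C(\beta)$. Once these dominance relations are checked term by term, the main terms emerge exactly as stated and the proposition follows directly from Lemma 2.6 with no further analytic input.
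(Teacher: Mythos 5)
Your proposal is correct and is exactly the paper's (largely implicit) argument: the paper simply states that the proposition follows "immediately" from Lemma \ref{estimates of v} by differentiating the relation $r\theta'=v_\theta/v_z$ along the axis-length trajectory and keeping the highest-order terms, which is precisely the computation you carry out, including the use of the flux ordering $g'/\delta^2<g''$ to subordinate the lower-order contributions. Your write-up in fact supplies more detail (the explicit quotient-rule bookkeeping and the identification of which terms fold into the remainder) than the paper itself does.
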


%We now give the key definition: normalized trajectory.
From  the trajectory $\eta(x,t)$, we define the arc-length trajectory $\eta^*(s)=\eta^*(x,s)$.

\begin{definition} (Arc-length trajectory.)\ 
Let $\eta^*$ be such that 
\begin{equation*}
\eta^*(s)=\eta^*(x,s):=\eta(x,t(s))\quad \text{and}\quad \eta^*(x,0)=\eta(x,0)
\end{equation*}
with  $\partial_st(s)=|u|^{-1}$.
\end{definition}
In this case we see $|\partial_s\eta^*(s)|=1$.
We define the unit tangent vector $\tau$ as 
\begin{equation*}
\tau(s)=\partial_s \eta^*(x,s),
\end{equation*}
the unit curvature vector $n$ as $\kappa n=\partial_s \tau$ with a curvature function $\kappa(s)>0$,
%\begin{itemize}
%\item
the unit torsion vector $b$ 
as : $b(s):=\pm\tau(s)\times n(s)$ ($\times$ is an exterior product)
 with a torsion function to be positive $T(s)>0$ (once we restrict $T$ to be positive, then the direction of $b$ can be uniquely determined), that is,
\begin{equation*}
Tb:=\partial_sn+\kappa \tau,\quad |b|=1
\end{equation*}
due to the Frenet-Serret formula.
By the estimates of $\theta''$ and $\theta'''$ in  Proposition \ref{estimates of theta},
 we  obtain the following key estimates.
\begin{lem}\label{key estimate on torsion}
For any $\epsilon>0$,
% there is $\delta>0$ such that 
%for $t\in I$, 
we have 
%$\tau\cdot e_\theta>0$ (this is rather definition), 
%$n\cdot e_\theta\to -1
%-(1+(r')^2)^{1/2}
%$ ($g'\to \infty$),
\begin{equation*}
(1/6)|u|^2|\partial_s\kappa|> \kappa\partial_t|u|,
%$\partial_s\kappa\approx g''(t)$ (with $t=t(s)$) 
(1/2)|\partial_s\kappa|>  |\kappa Tb\cdot e_\theta|
\quad\text{and}\quad
-1\leq (n\cdot e_\theta)<-1/2
\end{equation*}
for sufficiently small  $\delta>0$. 
%and 
%$\kappa T b\cdot e_\theta\approx 0\
%o(|\theta'''|)
\end{lem}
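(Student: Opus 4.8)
The plan is to compute the Frenet apparatus of the particle trajectory $\eta^*$ directly from the axis-length parameterization $\tilde\eta(z)=(r\cos\theta,r\sin\theta,z)$, which traces the same spatial curve but in the variable $z$, so that $\tau=u/|u|$. First I would differentiate $\tilde\eta$ three times in $z$ using the moving-frame rules $\partial_z e_r=\theta'e_\theta$, $\partial_z e_\theta=-\theta'e_r$, obtaining $\partial_z\tilde\eta=r'e_r+r\theta'e_\theta+e_z$, $\partial_z^2\tilde\eta=(r''-r(\theta')^2)e_r+(2r'\theta'+r\theta'')e_\theta$, and a similar expression for $\partial_z^3\tilde\eta$ whose $e_\theta$-component is dominated by $r\theta'''$. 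By the standing hypotheses the quantities $r,r',r'',\theta'$ are bounded independently of $\delta$, while Proposition~\ref{estimates of theta} makes $\theta''\approx-C(\beta)g'$ and $\theta'''\approx-C(\beta)g''$ large. I would then read off $\kappa$, $n$, $T$, $b$ from the standard reparameterization formulas $\kappa=|\partial_z\tilde\eta\times\partial_z^2\tilde\eta|/|\partial_z\tilde\eta|^3$, $T=(\partial_z\tilde\eta\times\partial_z^2\tilde\eta)\cdot\partial_z^3\tilde\eta/|\partial_z\tilde\eta\times\partial_z^2\tilde\eta|^2$ and $b=\pm\tau\times n$.

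For the third estimate I would exploit that the $e_\theta$-component $2r'\theta'+r\theta''\approx r\theta''<0$ of $\partial_z^2\tilde\eta$ is the dominant (order $g'$) term, its $e_r$-component is bounded and its $e_z$-component vanishes. Crucially, since the inflow is large, $\tau\cdot e_\theta=(v_\theta/v_z)/|\partial_z\tilde\eta|$ is small, so projecting $\partial_z^2\tilde\eta$ off $\tau$ does not disturb the $e_\theta$-dominance; after normalization $n$ therefore points essentially along $-e_\theta$, giving $-1\le n\cdot e_\theta<-1/2$. The same picture shows that $b=\pm\tau\times n$ lies, to leading order, in the plane spanned by $e_r$ and $e_z$, so that $b\cdot e_\theta$ is small; this is the geometric fact that powers the second estimate.

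For the first two estimates I would track powers of $g$. One finds $\kappa\sim g'$, $\partial_t|u|\approx\rho g'\sim g'$ by the last line of Lemma~\ref{estimates of v}, while $\partial_s\kappa=(v_z/|u|)\partial_z\kappa$ brings out the top-order contribution $\theta'''\sim g''$, so $\partial_s\kappa\sim g''$. The first inequality then reduces, after the bounded geometric factors cancel, to a comparison of the type $g\,g''\gtrsim (g')^2$, and the second to $|\partial_s\kappa|\sim g''$ dominating $\kappa T\,(b\cdot e_\theta)$, where the smallness of $b\cdot e_\theta$ offsets the possible growth $\kappa T\sim g''/g'$ of the torsion. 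Both reductions are closed by the flux condition \eqref{flux condition}: the chain $1/\delta^3<g'/\delta^2<g''$ forces $1\ll g'\ll g''$ with $g''$ outrunning $(g')^2$ as $\delta\to0$, and at the same time makes every ``remainder'' lower order than its main term.

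The hard part will be the torsion/binormal analysis behind the second estimate. Because $\kappa T$ is itself large (the triple product $(\partial_z\tilde\eta\times\partial_z^2\tilde\eta)\cdot\partial_z^3\tilde\eta$ carries a term of order $g''$), the inequality is not a crude size bound: I must show that the binormal is genuinely almost orthogonal to $e_\theta$, that no cancellation in the cross products spoils the $-e_\theta$ direction of $n$, and that the accumulated remainders in $\kappa$, $\partial_s\kappa$ and $T$ stay negligible against the $g'$- and $g''$-driven main terms \emph{uniformly} in the admissible $g$. This bookkeeping, together with the hierarchy $g''\gg(g')^2\gg g'\gg1$ extracted from \eqref{flux condition}, is where the smallness of $\delta$ is spent.
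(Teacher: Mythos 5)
Your proposal is correct in substance and, for most of the lemma, follows the paper's own route: the same parameterization $\tilde\eta(z)$, the same inputs $\theta''\approx -C(\beta)g'$ and $\theta'''\approx -C(\beta)g''$ from Proposition \ref{estimates of theta}, the same mechanism for the third estimate (smallness of $\theta'$, equivalently of $\tau\cdot e_\theta$, forces $n$ to align with $-e_\theta$; in the paper this smallness is arranged by choosing $\beta$ small), and the same order-counting for the first estimate. The genuine divergence is in the second estimate. The paper never computes $T$ at all: it writes $\kappa Tb\cdot e_\theta=\partial_s(\kappa n)\cdot e_\theta-(\partial_s\kappa)\,n\cdot e_\theta+\kappa^2\tau\cdot e_\theta$ via Frenet--Serret and exploits the cancellation between the two $\theta'''$-order terms, which is precisely where the bound $n\cdot e_\theta<-1/2$ is consumed. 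You instead compute $T$ from the triple-product formula, and this route works --- in fact it is more economical than you anticipate. The cancellation you flag as ``the hard part'' is already built into the triple product: the $e_\theta$-component of $\partial_z\tilde\eta\times\partial_z^2\tilde\eta$ equals $r''-r(\theta')^2=O(1)$, not $O(g')$, so the triple product is only $O\bigl(g''+\theta'(g')^2\bigr)$ while $|\partial_z\tilde\eta\times\partial_z^2\tilde\eta|^2\approx (r\theta'')^2\bigl(1+(r')^2\bigr)$; hence $\kappa T\lesssim \theta'g'+g''/g'\ll g''\approx|\partial_s\kappa|$, and since trivially $|b\cdot e_\theta|\le 1$, the second inequality follows without the near-orthogonality of $b$ and $e_\theta$ that you planned to establish. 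One caveat applies equally to your argument and to the paper's: the claim that \eqref{flux condition} forces $g''$ to outrun $(g')^2$ is not literally implied by that condition (it gives $g'>1/\delta$ and $g''>g'/\delta^2$ but no upper bound on $g'$, so e.g.\ $g'=\delta^{-10}$, $g''=\delta^{-12}$ is admissible with $(g')^2\gg g''$); both you and the paper implicitly normalize $g'\approx\delta^{-1}$, $g''\approx\delta^{-3}$, so you inherit, rather than introduce, this imprecision.
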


\begin{proof}
%Since $v_z>0$,  we 
Recall  the arc-length  trajectory ($z=z(s)$): 
\begin{equation*}
\eta^* (x,s)=\tilde\eta(x,z)=(r(z)\cos \theta(z), r(z)\sin \theta(z), z)\quad\text{with}\quad \theta'>0.
%\quad\text{with}\quad
%t(0)=0,\quad \theta(0)=0.
\end{equation*}
%with $b\cdot e_z>0$.
Thus
$\tau$ and  $\kappa n$ are expressed as 
\begin{equation*}
\tau=(\partial_z\tilde\eta)z',\quad \kappa n=\partial_s^2\eta^*=\partial_z^2\tilde\eta(z')^2+\partial_z\tilde\eta z''.
\end{equation*}
%By direct calculation, 
Also we immediately have the following formula:
\begin{equation*}
%\partial_s^2\eta^*=\partial_z^2\tilde \eta(z')^2+\partial_z\tilde \eta z''\quad\text{and}\quad 
\partial_s^3\eta^*=\partial_s(\kappa n)=\partial_z^3\tilde \eta(z')^3+3\partial_z^2\tilde\eta z'z''+\partial_z\tilde\eta z'''.
\end{equation*} 
We recall that 
\begin{equation*}
\partial_z\tilde\eta\cdot e_z=1,\quad \partial_z\tilde\eta\cdot e_\theta=r\theta'=\frac{u_\theta}{u_z}
%\lesssim 1/(\rho g)
\lesssim \beta^6
\end{equation*}
and then we see that 
\begin{equation*}
\theta'\lesssim \beta^5\quad\text{and}\quad
\theta''\approx-C(\beta)g'.
\end{equation*}
Also recall that 
\begin{equation*}
\partial_z\tilde\eta\cdot e_r=r'=\frac{u_r}{u_z}=(\partial_z\bar R)(\bar R^{-1},Z,t).
\end{equation*}
%Clearly, 
%$r\theta'=\Phi$, 
%$r'=\partial_z\bar R
%=v_r/v_z
%$ (this essentially links the streamline and the trajectory for fixed $t$).
%Near the possible blowup time, we easily see that 
%Recall that $\theta'\approx 1/r$.
% is small enough and positive.
% and keeping the same sign.
%Thus  $\tau\cdot e_\theta>0$ (this is rather definition).
Direct calculations yield (just see the highest order term composed by $\theta''$ or $\theta'''$,
and neglect the small terms composed by $\theta'$) 
\begin{eqnarray*}
\partial_z\tilde \eta(x, z)&=&
(-r\theta'\sin\theta,r\theta'\cos\theta,1)+(r'\cos\theta,r'\sin\theta, 0)
%\approx (r'\cos\theta,r'\sin\theta,1)
,\\
\partial_z^2\tilde \eta(x,z)&=&
-r(\theta')^2(\cos\theta,\sin\theta,0)+(-r\theta''\sin\theta, r\theta''\cos\theta,0)
\\
& &
+
r''(\cos\theta,\sin\theta,0)+2r'\theta'(-\sin\theta,\cos\theta,0)\\
&=&
r\theta''(-\sin\theta,\cos\theta,0)
%+r''(\cos\theta,\sin\theta,0)
+\text{remainder},
\\
\partial_z^3\tilde \eta(x,z)
&\approx&
r\theta'''(-\sin\theta,\cos\theta,0)+\text{remainder},\\
z'(s)&=&|\partial_z\tilde\eta|^{-1}=(1+(r')^2+(r\theta')^2)^{-1/2}= (1+(r')^2)^{-1/2}+\text{remainder},\\
%=(1+(r')^2)^{-1/2}+\text{remainder}\\
z''(s)&=&-(1+(r')^2+(r\theta')^2)^{-2}(r'r''+r\theta'(r'\theta'+r\theta''))\\
&=&
-(1+(r')^2)^{-2}r^2\theta'\theta''+\text{remainder},\\
%&=&
%-C(\epsilon)\theta''
%(1+(r')^2)^{-2}(r'r''+r^2\theta'\theta'')
%+\text{remainder}.
z'''(s)
&\approx&
C(\beta)\theta'''+\text{remainder}.
\end{eqnarray*}
Therefore
\begin{eqnarray*}
\partial_s^2\eta^*\cdot e_\theta&=&\kappa n\cdot e_\theta\\
&=&
r\theta''(1+(r')^2)^{-1}
-r\theta'(1+(r')^2)^{-2}r^2\theta'\theta''
+\text{remainder},
\end{eqnarray*}
%  $r'$, $r''$  and $r'''$ can be controlled by $L$
%due to the uniformly smooth laminar profile, 
%and  recall that  $\theta''\approx -C(\epsilon)g'$. Thus
\begin{eqnarray*}
\kappa^2&=&|\kappa n|^2
=|\partial_z^2\tilde \eta|^2(z')^4+
2(\partial_z\tilde\eta\cdot\partial_z^2\tilde\eta)(z')^2z''+
|\partial_z\tilde\eta|^2(z'')^2\\
&=&
(r\theta'')^2(1+(r')^2)^{-2}
+((r')^2+1)(1+(r')^2)^{-4}(r^2\theta'\theta'')^2+
%\left((r'')^2+(r\theta'')^2\right)(1+(r')^2)^{-2}-2r'r''(1+(r')^2)^{-3}r^2\theta'\theta''\\
%& &
%+(1+(r')^2)^{-4}r^4(\theta'\theta'')^2(r')^2+
\text{remainder}
%&=&
%(r\theta'')^2(1+(r')^2)^{-2}+\text{remainder}\\
%&\approx& 
%C(\epsilon)(\theta'')^2+\text{remainder}.
%g'(t)^2.
%((\partial_z\theta)^4+(\partial_z^2 \theta)^2)(1+(\partial_z \theta)^2)^{-2}
%\\
%& &
%-2\partial_z\theta\partial_z^2\theta(1+\partial_z \theta\partial_z^2\theta) (1+(\partial_z \theta)^2)^{-5/2}
%\\
%& &
%+(\partial_z \theta\partial_z^2 \theta)^2(1+(\partial_z \theta)^2)^{-2}\\
%&\approx&
%(\partial_z^2\theta)^2>0\quad (\text{near the blowup time}).
\end{eqnarray*}
and 
\begin{eqnarray*}
n\cdot e_\theta&=&\frac{\kappa n\cdot e_\theta}{\kappa}\\
&=&
-\frac{r(1+(r')^2)^{-1}-(1+(r')^2)^{-2}r^2\theta'}{\left(r^2(1+(r')^2)^{-2}+(1+(r')^2)^{-3}r^4(\theta')^2\right)^{1/2}}+\text{remainder}.
\end{eqnarray*}
The remainders are small provided by small $\delta$.
We choose $\beta$ (which is independent of $\epsilon$ and $\delta$) such that the main term in $n\cdot e_\theta$ to be strictly smaller than $-1/2$
(by taking small $\beta$, then $\theta'$ becomes small).
%Note that the corresponding second order terms composed by  $r^2\theta'\theta''$ are much smaller than 
%the corresponding first order terms composed by $r\theta''$.
Thus we have 
\begin{equation}\label{cancellation}
n\cdot e_\theta=\frac{\kappa n\cdot e_\theta}{\kappa}<-1/2.
\end{equation}
Also recall that $\theta'''\approx -C(\beta)g''$.
% and $\theta''\approx\Phi'\approx-g'(t)/g(t)^3$.
The dominant term of $\partial_s(\kappa^2)$  is composed by $\theta''$ and $\theta'''$, more precisely, 
\begin{equation*}
\partial_s(\kappa^2)= 2(\partial_s\kappa) \kappa= %2\partial_z^3\theta\partial_z^2\theta-2(\partial_z^2\theta)^2>0.
2r\theta''(r\theta''')(1+(r')^2)^{-5/2}+\text{remainder}\approx
C(\beta)\theta''\theta'''. 
%g'(t)g''(t).
\end{equation*}
%Note the second order term of $\partial_s(\kappa^2)$ is composed by $\pm (\theta'')^2\approx\pm (1-t)^{4\beta-2}$.
%due the the smallness of $\theta'$. 
%Thus $\partial_s\kappa\approx \theta'''\gg 0$.
%  we have  
%\begin{equation*}
%\partial_z^3\theta\leq \partial_z^2\theta<0.
%\end{equation*}

We also see that 
%$\partial_z^3\theta$ is negative, 
\begin{eqnarray*}
& &\kappa= |r\theta''|(1+(r')^2)^{-1}+\text{remainder},\\
%\partial_z^2\theta$,
&  &\kappa n\cdot e_\theta= r\theta''(1+(r')^2)^{-1}+\text{remainder},\\
%\partial_z^2\theta
& &\partial_s(\kappa n)\cdot e_\theta=\partial_s^3\tilde\eta\cdot e_\theta
=
r\theta'''(1+(r')^2)^{-3/2}+\text{remainder},\\
%\approx \partial_z^3\theta
& &\partial_s\kappa=\frac{r\theta''(r\theta''')(1+(r')^2)^{-5/2}}{\kappa}+\text{remainder}\\
& &
\ \ \ \ \  
=
-r\theta'''(1+(r')^2)^{-3/2}+\text{remainder}
%\approx C(\epsilon)\theta'''
%\partial_z^3\theta-\partial_z^2\theta
\end{eqnarray*}
in $t\in I$.
``remainder" is small compare with the main terms provided by small $\delta>0$.
We immediately obtain
$(1/6)|u|^2|\partial_s\kappa|> \kappa\partial_t|u|$
for sufficiently small $\delta>0$,
since the left hand side is $\delta^{-3}$-order, while, the right hand side is $\delta^{-2}$-order.
In order to show $(1/2)|\partial_s\kappa|> |\kappa Tb\cdot e_\theta|$,
we use \eqref{cancellation}. 
%\begin{equation}\label{cancellation}
%n\cdot e_\theta=\frac{\kappa n\cdot e_\theta}{\kappa}\to -1\quad (\epsilon\to 0).
%\end{equation}
By the Frenet-Serret formula,
\begin{equation*}
Tb=\partial_s n+\kappa\tau,
\end{equation*}
we see that 
\begin{equation*}
\kappa Tb\cdot e_\theta=\partial_s(\kappa n)\cdot e_\theta
-(\partial_s\kappa) n\cdot e_\theta+\kappa^2 \tau\cdot e_\theta.
\end{equation*}
Thus, by the direct calculation with \eqref{cancellation}, we can find a cancellation on the  highest order term composed by $\theta'''$, 
%$\theta'''\approx \partial_s\kappa$,
 and then we have  
\begin{equation*}
|\kappa Tb\cdot e_\theta|<(1/2) |\partial_s\kappa|
\end{equation*}
for sufficiently  small $\delta>0$.
%$t\in I$.
%In this case the highest order term may be  composed by $\pm(\theta'')^2$,
% term may appear,
% but it is controlled by $\theta'''$: 
%\begin{equation*}
%(\theta'')^2\ll|\theta'''|.
%\end{equation*} 
\end{proof}
%Now let us return to prove the main theorem.
In what follows, we use  a differential geometric idea. See Chan-Czubak-Y \cite[Section 2.5]{CCY}, more originally, see Ma-Wang \cite[(3.7)]{MW}.
They considered 2D separation phenomena using fundamental differential geometry. 
%Thus the physical phenomena is totally different from ours. 
The key idea here is  ``local pressure estimate" on a normal coordinate in $\bar \theta$, $\bar r$ and $\bar z$ valuables.
Two derivatives to the scalar function $p$ on the normal coordinate is commutative, namely,
$\partial_{\bar r}\partial_{\bar \theta}p(\bar \theta, \bar r,\bar z)-\partial_{\bar \theta}\partial_{\bar r}p(\bar \theta,\bar r,\bar z)=0$ (Lie bracket).
This fundamental observation is the key to extract the local effect of the pressure. 

\begin{remark}
It should be noticed that Enciso and Peralta-Salas \cite{EP}
considered the existence of Beltrami fields $u$
with a nonconstant proportionality factor $f$:
\begin{equation}\label{Beltrami field}
\nabla \times u=fu,\quad \nabla\cdot u=0\quad\text{in}\quad \mathbb{R}^3.
\end{equation} 
It is well known that a Beltrami field is also a solution of the steady Euler equation in $\mathbb{R}^3$.
They showed that for a generic function $f$, the only vector field $u$ satisfying
\eqref{Beltrami field} is the trivial one $u\equiv 0$.
See (2.12), (3.4) and  (3.6) in \cite{EP} for the specific condition on $f$.
Note that $g_{ij}$  (induced metric of the level set of $f$) is the fundamental component of the condition.
It would be also interesting to consider whether we can apply their method to our unsteady flow problem, and compare with our method. 
\end{remark}

For any point $x\in\mathbb{R}^3$ near the arc-length trajectory $\eta^*$ is uniquely  expressed as $x=\eta^*(\bar \theta)+\bar r n(\bar\theta)+\bar z b(\bar\theta)$ with $(\bar\theta,\bar r,\bar z)\in\mathbb{R}^3$ (the meaning of the parameters $s$ and $\bar \theta$ are the same along the arc-length trajectory).
Thus we have that 
\begin{eqnarray*}
\partial_{\bar \theta}x&=&
\tau+\bar r(Tb-\kappa \tau)+\bar z \kappa n,\\
\partial_{\bar r}x&=& n,\\
\partial_{\bar z}x&=&b.
\end{eqnarray*}
This means that 
\begin{equation*}
\begin{pmatrix}
\partial_{\bar \theta}\\
\partial_{\bar  r}\\
\partial_{\bar z}
\end{pmatrix}
=
\begin{pmatrix}
1-\kappa \bar r& \bar z\kappa& \bar r T\\
0& 1& 0\\
0& 0& 1
\end{pmatrix}
\begin{pmatrix}
\tau\\
n\\
b\\
\end{pmatrix}.
\end{equation*}
\begin{remark}
For any smooth scalar function $f$,
we have 
\begin{equation*}
\partial_{\bar \theta}f(x)=\nabla f\cdot \partial_{\bar \theta}x.
\end{equation*}
$\nabla f$ itself is essentially independent of any coordinates, thus we can regard a partial derivative as a vector.
\end{remark}
By the fundamental calculation, we have  the following inverse matrix:
\begin{equation*}
\begin{pmatrix}
\tau\\
n\\
b\\
\end{pmatrix}
=
\begin{pmatrix}
(1-\kappa \bar r)^{-1}& -\bar zT (1-\kappa \bar r)^{-1}& -\bar r T(1-\kappa \bar r)^{-1}\\
0& 1& 0\\
0& 0& 1
\end{pmatrix}
\begin{pmatrix}
\partial_{\bar \theta}\\
\partial_{\bar  r}\\
\partial_{\bar z}
\end{pmatrix}.
\end{equation*}
Therefore we have the following orthonormal moving frame:
 $\partial_{\bar r}=n$, $\partial_{\bar z}=b$ and 
$$
(1-\kappa \bar r)^{-1}\partial_{\bar \theta}-\bar z T(1-\kappa \bar r)^{-1}\partial_{\bar r}-\bar rT(1-\kappa \bar r)^{-1}\partial_{\bar z}=\tau.
$$

In order to abbreviate the complicated indexes, we re-define the absolute value of the velocity along the trajectory. 
 Let (the indexes are $x$ and $t$ respectively)
\begin{equation*}
|u|:=|u(\eta(x',t),t)|\quad\text{with}\quad
x'=\eta^{-1}(x,t)
\end{equation*}
and 
\begin{equation*}
\partial_t|u|:=\partial_{t'}|u(\eta(x',t'),t')|\bigg|_{t'=t}\quad\text{with}\quad
x'=\eta^{-1}(x,t).
\end{equation*}

\begin{lem}\label{Euler flow along the trajectory}
We see  $-\nabla p\cdot \tau=\partial_t|u|$ along the trajectory.
\end{lem}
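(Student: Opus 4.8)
The plan is to exploit the single structural fact that along a particle trajectory the velocity is tangent to the trajectory, so that $u = |u|\tau$, and then to read off $\partial_t|u|$ by pairing the material form of the Euler equation against $u$ itself.

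First I would record that the arc-length unit tangent satisfies $\tau = u/|u|$: since $\eta^*(x,s) = \eta(x,t(s))$ with $\partial_s t(s) = |u|^{-1}$, the chain rule together with \eqref{eq:flowC} gives $\tau = \partial_s\eta^* = (\partial_t\eta)\,\partial_s t = u/|u|$. Because the flow is unilateral with $v_z>0$ (indeed $|v_z|\gtrsim\beta^{-6}$ by Proposition \ref{formula of velocities}), we have $|u|>0$, so this normalization is legitimate. Hence $u = |u|\,\tau$ pointwise along the trajectory.

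Next I would invoke the material form of the Euler equations. Evaluating \eqref{Euler eq.} along $\eta$ and using \eqref{eq:flowC}, the Lagrangian time derivative of $u$ along the trajectory coincides with the material derivative, so that $(\partial_t u)(\eta,t) = (\partial_t u + (u\cdot\nabla)u)(\eta,t) = -\nabla p(\eta,t)$, where $\partial_t u$ denotes the derivative taken along the trajectory. Pairing this with $u$ and using the identity already established in \eqref{multiply}, namely $\frac{1}{2}\partial_t|u|^2 = (\partial_t u)\cdot u$ along the trajectory, I obtain $|u|\,\partial_t|u| = (-\nabla p)\cdot u = (-\nabla p)\cdot(|u|\,\tau) = |u|\,(-\nabla p\cdot\tau)$. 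Dividing by $|u|>0$ then yields $\partial_t|u| = -\nabla p\cdot\tau$, which is the claim.

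There is no genuine analytic obstacle here; the statement is a direct consequence of the material-derivative identity \eqref{multiply} and the tangency $u=|u|\tau$. The one point that demands care is purely notational: the symbol $\partial_t|u|$ has just been redefined as the time derivative of the speed taken strictly along the trajectory, with the base point re-expressed via $x'=\eta^{-1}(x,t)$, so I would confirm that the $\partial_t$ appearing in \eqref{multiply} and the $\partial_t|u|$ in the statement denote the same Lagrangian derivative before cancelling $|u|$. Once this bookkeeping is in place the identity is immediate.
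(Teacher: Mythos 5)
Your proof is correct and follows essentially the same route as the paper: both evaluate the Euler equation in material form along the trajectory and project it onto the unit tangent $\tau = u/|u|$. The only difference is cosmetic --- the paper extracts $\partial_t|u|$ by differentiating $u\cdot\tilde\tau$ (with $\tilde\tau$ the unit tangent in time) and invoking the orthogonality $u\cdot\partial_s\tau = 0$, whereas you differentiate $|u|^2 = u\cdot u$ as in \eqref{multiply} and divide by $|u|>0$, which bypasses tracking the derivative of the unit tangent altogether.
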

\begin{proof}
Let us define a unit tangent vector $\tilde\tau$ (in time $t'$) as follows:
\begin{equation*}
\tilde \tau_{x,t}(t'):=\frac{u}{|u|}(\eta(x',t'),t')\quad\text{with}\quad x'=\eta^{-1}(x,t).
\end{equation*}
%Let $s$ be a arc-length parameter and we re-define $\tau$ with a function $s(t)$ such that 
Note that  there is a re-parametrize factor $s(t')$ such that 
\begin{equation*}
\tau(s(t'))=\tilde \tau(t').
%\quad \text{and}\quad |\tau(s)|=1.
\end{equation*}
Since $u\cdot \partial_s\tau=0$, we see that 
\begin{eqnarray*}
\partial_{t'}|u(\eta(x',t'),t')|&=&
\partial_{t'}(u(\eta(x',t'),t')\cdot \tilde\tau_{x,t}(t'))\\
&=&
\partial_{t'}(u(\eta(x',t'),t'))\cdot \tilde\tau_{x,t}(t')+
u(\eta (x',t'),t')\cdot\partial_s\tau\partial_{t'}s\\
&=&\partial_{t'}(u(\eta(x',t'),t'))\cdot \tilde\tau_{x,t}(t').
\end{eqnarray*}
By the above calculation we have 
\begin{equation*}
-\nabla p\cdot\tau=\partial_{t'}(u(\eta(x',t'),t')\cdot\tau|_{t'=t}=\partial_{t'}(u(\eta(x',t'),t')\cdot\tilde\tau|_{t'=t}=
\partial_{t'}|u|\bigg|_{t'=t}.
\end{equation*}
\end{proof}

%Sometimes we regard as a function in $s$ parameter: $t=t(s)$.
\begin{lem}
Along the arc-length trajectory, we have 
\begin{equation*}
3\kappa\partial_t|u|+\partial_s\kappa|u|^2=\partial_{\bar r}\partial_t|u|
\end{equation*}
and 
\begin{equation*}
T\kappa|u|^2=\partial_{\bar z}\partial_t|u|.
\end{equation*}
\end{lem}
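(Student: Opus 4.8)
The plan is to exploit the clean tangential relation $\partial_t|u|=-\nabla p\cdot\tau$ of Lemma~\ref{Euler flow along the trajectory}, differentiate it in the two normal directions $\partial_{\bar r}=n$ and $\partial_{\bar z}=b$, and convert the resulting mixed derivatives of $p$ into curvature/torsion data via the commutativity $\partial_{\bar r}\partial_{\bar\theta}p=\partial_{\bar\theta}\partial_{\bar r}p$ on the normal coordinate chart (the vanishing Lie bracket emphasized above).

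First I would pin down the full Frenet decomposition of $\nabla p$ along the trajectory. Since $u=|u|\tau$ there and the material derivative of $u$ equals $-\nabla p$ by \eqref{Euler eq.}, differentiating with $ds/dt=|u|$ and $\partial_s\tau=\kappa n$ gives
\begin{equation*}
-\nabla p=\frac{d}{dt}\big(|u|\tau\big)=\partial_t|u|\,\tau+\kappa|u|^2\,n\qquad\text{on the trajectory.}
\end{equation*}
Dotting with $\tau$, $n$, $b$ and recalling $\partial_{\bar\theta}x=\tau$, $\partial_{\bar r}x=n$, $\partial_{\bar z}x=b$ at $\bar r=\bar z=0$ yields the three component identities $\partial_{\bar\theta}p=-\partial_t|u|$, $\partial_{\bar r}p=-\kappa|u|^2$ and $\partial_{\bar z}p=0$ along the trajectory. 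I would also record the parametrization identity $\partial_s|u|^2=2\partial_t|u|$, immediate from $ds/dt=|u|$.

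Next I would write $\partial_t|u|=-\tau(p)$ as a scalar function near the trajectory, using the moving-frame expression for $\tau$ as a first-order operator, and then apply $\partial_{\bar r}$ and $\partial_{\bar z}$. Expanding $\tau(p)$ in the chart, every summand carrying an explicit factor $\bar r$ or $\bar z$ either drops after evaluating at $\bar r=\bar z=0$ or contributes only through the derivative that removes its prefactor; the surviving terms produce $\kappa\,\partial_{\bar\theta}p$ and $\partial_{\bar r}\partial_{\bar\theta}p$ in the $\bar r$-derivative, and $\partial_{\bar z}\partial_{\bar\theta}p$ together with $T\,\partial_{\bar r}p$ in the $\bar z$-derivative. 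At this stage I invoke the local observation $\partial_{\bar r}\partial_{\bar\theta}p=\partial_{\bar\theta}\partial_{\bar r}p$ and $\partial_{\bar z}\partial_{\bar\theta}p=\partial_{\bar\theta}\partial_{\bar z}p$. Because the trajectory is precisely the coordinate line $\{\bar r=\bar z=0\}$, tangential differentiation commutes with restriction, so $\partial_{\bar\theta}\partial_{\bar r}p=\partial_s(-\kappa|u|^2)$ and $\partial_{\bar\theta}\partial_{\bar z}p=\partial_s 0=0$. Substituting the component identities and $\partial_s|u|^2=2\partial_t|u|$ then collapses the $\bar r$-derivative to $\kappa\partial_t|u|+\partial_s(\kappa|u|^2)=3\kappa\partial_t|u|+(\partial_s\kappa)|u|^2$ and the $\bar z$-derivative to $T\kappa|u|^2$, as claimed.

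The step I expect to be the main obstacle is the bookkeeping in the last paragraph: one must extend $\partial_t|u|$ off the trajectory consistently with $-\tau(p)$, track which moving-frame terms survive differentiation at $\bar r=\bar z=0$, and apply the commutativity of the second derivatives of $p$ with the correct Frenet–Serret signs ($\partial_s n=Tb-\kappa\tau$, $\partial_s b=-Tn$). Getting the torsion sign right in the $\bar z$-derivative, in particular, hinges on using the signed value $\partial_{\bar r}p=-\kappa|u|^2$ rather than its magnitude.
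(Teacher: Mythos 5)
Your proposal is correct and follows essentially the same route as the paper: both differentiate the relation $\partial_t|u|=-\nabla p\cdot\tau$ of Lemma~\ref{Euler flow along the trajectory} in the normal directions $\partial_{\bar r}=n$, $\partial_{\bar z}=b$, expand $\partial_\tau$ in the moving frame, commute the mixed partials of $p$ in the normal coordinates, and substitute $\nabla p\cdot n=-\kappa|u|^2$, $\nabla p\cdot b=0$. The only cosmetic differences are that you obtain the Frenet decomposition of $-\nabla p$ directly from the material derivative of $|u|\tau$ and use $\partial_s|u|^2=2\partial_t|u|$, where the paper works instead with $\partial_s^2t=-|u|^{-3}\partial_t|u|$; these are equivalent bookkeeping choices (and your careful handling of the Frenet--Serret signs is actually cleaner than the paper's displayed frame matrix).
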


\begin{proof}
By using the orthonormal moving frame, 
we have the following gradient of the pressure,
\begin{equation*}
\nabla p= (\partial_\tau p)\tau+(\partial_np)n+(\partial_bp)b.
%(1+\kappa r)^{-1}\partial_sp-\eta T(1+\kappa r)^{-1}\partial_r+rT(1+\kappa r)^{-1}\partial_\eta
\end{equation*}
%By the unit tangent vector, we see 
%\begin{equation*}
%\partial_s\eta^*(s)=\partial_t\eta\partial_st=\tau
%\end{equation*}
%and thus
Recall that
\begin{equation*}
\partial_st=|u|^{-1}
\quad\text{and}\quad
\partial_t\eta\cdot \tau=|u|.
\end{equation*}
By the unit normal vector with the curvature constant, we see
\begin{equation*}
\kappa n=\partial_s^2\eta^*=\partial_s(\partial_t\eta\partial_st)
=\partial_t^2\eta(\partial_st)^2+\partial_t\eta\partial_s^2t.
\end{equation*}
%Recall that $\partial_t\eta=|u|\tau$. 
Thus we have 
\begin{eqnarray*}
-(\nabla p\cdot n)&=&
%(D_tu\cdot n)
(\partial_t^2\eta\cdot n)=
\kappa|u|^2,\\
-\partial_s(\nabla p\cdot n)&=&\partial_s(\kappa(\partial_st)^{-2})
=\partial_s\kappa(\partial_s t)^{-2}-2\kappa(\partial_st)^{-3}(\partial_s^2t),\\
-\nabla p\cdot \tau&=&-|u|^3\partial_s^2 t,\\
-\nabla p\cdot b&=&0.
\end{eqnarray*}
By Lemma \ref{Euler flow along the trajectory} with
$0=\kappa n\cdot \tau=\partial_t^2\eta\cdot \tau(\partial_s t)^2+\partial_t\eta\cdot \tau(\partial_s^2 t)$,
we also have 
\begin{equation*}
\partial_s^2t=-|u|^{-3}\partial_t|u|.
\end{equation*}
Recall that  
\begin{equation*}
\partial_\tau=(1-\kappa\bar r)^{-1}\partial_{\bar\theta}-\bar zT(1-\kappa\bar r)^{-1}\partial_{\bar r}
-\bar rT(1-\kappa\bar r)^{-1}\partial_{\bar z}.
\end{equation*}
Along the arc-length trajectory, we have 
\begin{eqnarray*}
-\partial_{\bar r}(\nabla p\cdot \tau)
%|_{\bar r,\bar z=0}
&=&-\partial_{\bar r}\partial_\tau p
%|_{\bar r,\bar z=0}
\\
&=&
-\kappa\partial_{\bar \theta} p-\partial_{\bar r}\partial_{\bar \theta} p-T\partial_{\bar z} p\\
\nonumber
(\text{commute}\ \partial_{\bar r}\ \text{and}\  \partial_{\bar \theta})
&=&
-\kappa(\nabla p\cdot \tau)-\partial_{\bar\theta} (\nabla p\cdot n)-T(\nabla p\cdot b)\\
&=&
-\kappa |u|^3\partial_s^2t+\partial_s\kappa(\partial_st)^{-2}-2\kappa(\partial_st)^{-3}(\partial_s^2 t)\\
&=&
3\kappa\partial_t|u|+\partial_s\kappa|u|^2.
\end{eqnarray*}
Since $\nabla p\cdot b=\partial_{\bar z} p\equiv 0$ along the trajectory, then
\begin{equation*}
-\partial_{\bar z}(\nabla p\cdot \tau)|_{\bar r,\bar z=0}=
-\partial_{\bar z}\partial_\tau p|_{\bar r,\bar z=0}=
-\partial_{\bar z}\partial_{\bar\theta} p-T\partial_{\bar r} p
=
-T(\nabla p\cdot n)=T\kappa|u|^2.
\end{equation*}

By Lemma \ref{Euler flow along the trajectory}
along the arc-length trajectory $\eta^*$, we have 
\begin{equation*}
3\kappa\partial_t|u|+\partial_s\kappa|u|^2=
-\partial_{\bar r}(\nabla p\cdot \tau)|_{\bar r,\bar z=0}
=
%\partial_{\bar r}(D_tu\cdot\tau)|_{\bar r,\bar z=0}\\
%&=&
%\partial_r(\nabla p)(\Phi(s,r),t)\cdot\tilde \tau_X|_{t=t_0,s=0}\\
%&=&
\partial_{\bar r}\partial_t|u|\\
%&=&
%\alert{\partial_t(\partial_2u_1)(\phi_X(t),t)}.
\end{equation*}
and 
\begin{equation*}
T\kappa|u|^2=
-\partial_{\bar z}(\nabla p\cdot \tau)|_{\bar r,\bar z=0}
%&=&
%-\partial_{\bar z}(D_t u\cdot \tau)|_{\bar r,\bar z=0}\\
%&=&
%\partial_r(\nabla p)(\Phi(s,r),t)\cdot\tilde \tau_X|_{t=t_0,s=0}\\
=
\partial_{\bar z}\partial_t|u|.\\
%&=&
%\alert{\partial_t(\partial_2u_1)(\phi_X(t),t)}.
\end{equation*}

\end{proof}

By using the above lemma we can finally prove the main theorem.
Since 
%$n\cdot e_\theta<0$ and $b\cdot e_\theta<0$,
%then
\begin{equation*}
\partial_\theta=(e_\theta\cdot n)\partial_{\bar r}+(e_\theta\cdot b)\partial_{\bar z}
\end{equation*}
%\begin{equation*}
%e_\theta=-\alpha_1 n-\alpha_2 b
%\text{for some}\quad \alpha_1,\alpha_2>0.
%\end{equation*}
%with $\alpha_1=-e_\theta\cdot n$, $\alpha_2=-e_\theta\cdot b$.
%Since 
and the axisymmetric flow is rotation invariant (along the $e_\theta$-direction),
%along $\theta$-direction is ``no Lagrangian deformation", 
\begin{eqnarray*}\label{rotation invariant}
0&=&\partial_\theta\partial_t|u|=(e_\theta\cdot n)\partial_{\bar r}\partial_t|u|
+(e_\theta\cdot b)\partial_{\bar z}\partial_t|u|\\
\nonumber
&=&
3(e_\theta\cdot n)\left(\kappa\partial_t|u|+\partial_s\kappa|u|^2\right)
+(e_\theta\cdot b)T\kappa|u|^2.
\end{eqnarray*}
However, by
%$(e_\theta\cdot n)\partial_s\kappa|u|^2\approx -g''(t)$ is the dominant term, and 
%it is in contradiction
%to 
Lemma \ref{key estimate on torsion},
we have 
\begin{eqnarray*}
& &\left|3(e_\theta\cdot n)\left(\kappa\partial_t|u|+\partial_s\kappa|u|^2\right)
+(e_\theta\cdot b)T\kappa|u|^2\right|\\
&\geq&
\frac{3|\partial_s\kappa||u|^2}{2}-3\kappa \big|\partial_t|u|\big|-T\kappa |u|^2\\
&\geq&
\frac{|\partial_s\kappa||u|^2}{2}>0,
\end{eqnarray*} 
and it is in contradiction.
%to \eqref{flux condition},
% since
%\begin{equation*}
%(e_\theta\cdot n)\kappa\partial_t|u|\ll |\partial_s\kappa||u|^2\quad\text{and}\quad
%|T\kappa|\ll |\partial_s\kappa|.
%\end{equation*}
%$\partial_t|u|$ must be negative, and it is in contradiction to the blowup assumption.

%%%%%%%%%%%%%%%%
%%%%%%%%%%%%%%%%%%%%%%%%%%%%%%
\vspace{0.5cm}
\noindent
{\bf Acknowledgments.}\ 
The author would like to thank Professor Norikazu Saito for letting me know 
the book \cite{FQV},
and Professor Hiroshi Suito for letting me know 
%the word
``Womersley number".
The author was partially supported by JST CREST.

%%%%%%%%%%%%%%%%
%%%%%%%%%%%%%%%%%%%%%%%%%%%%%%%%%%
\bibliographystyle{amsplain}

\end{document}